\newtheorem{thrm}{Theorem}[section]
\newtheorem{lem}[thrm]{Lemma}
\newtheorem{exam}[thrm]{Example}
\newtheorem{cor}[thrm]{Corollary}
\theoremstyle{definition}
\journal{...}
\begin{document}

\begin{frontmatter}


\cortext[cor1]{Corresponding author (+903562521616-3087)}

\title{On the representation of $k$ sequences of generalized order-$k$
numbers}


\author[rvt]{Kenan Kaygisiz\corref{cor1}}
\ead{kenan.kaygisiz@gop.edu.tr} {\author[rvt]{Adem Sahin }}
\ead{adem.sahin@gop.edu.tr}
\address[rvt]{Department of Mathematics, Faculty of Arts and Sciences,
Gaziosmanpa\c{s}a University, 60250 Tokat, Turkey}

\begin{abstract}
In this paper, we define $k$-generalized order-$k$ numbers and we
obtain a relation between $i$-th sequences and $k$-th sequences of
$k$-generalized order-$k$ numbers. We give some determinantal and
permanental representations of $k$-generalized order-$k$ numbers by
using various matrices. Using the relation between $i$-th sequences
and $k$-th sequences of $k$-generalized order-$k$ numbers we give
some determinantal and permanental representations of $i$-th
sequences of generalized order-$k$ numbers. In addition, we obtain
Binet's formula for generalized order-$k$ Pell numbers by using our
representations.
\end{abstract}
\begin{keyword}
Order-$k$ Fibonacci numbers, $k$ sequences of the generalized
order-$k$ Fibonacci numbers, $k$ sequences of the generalized
order-$k$ Pell numbers, Hessenberg Matrix.
\end{keyword}

\end{frontmatter}



\section{Introduction}


Fibonacci numbers, Pell numbers and their generalizations have been
studying for a long time. One of these generalizations was given by
Miles in 1960.

Miles [6] defined generalized order-$k$ Fibonacci numbers(GO$k$F) as,%
\begin{equation}
f_{k,n}=\sum\limits_{j=1}^{k}f_{k,n-j}\
\end{equation}%
for $n>k\geq 2$, with boundary conditions:
$f_{k,1}=f_{k,2}=f_{k,3}=\cdots =f_{k,k-2}=0$ and
$f_{k,k-1}=f_{k,k}=1.$\newline

Er [3] defined $k$ sequences of generalized order-$k$ Fibonacci numbers ($k$%
SO$k$F) as; for $n>0,$ $1\leq i\leq k$%
\begin{equation}
f_{k,n}^{\text{ }i}=\sum\limits_{j=1}^{k}c_{j}f_{k,n-j}^{\text{ }i}\
\
\end{equation}%
with boundary conditions for $1-k\leq n\leq 0,$

\begin{equation*}
f_{k,n}^{\text{ }i}=\left\{
\begin{array}{l}
1\text{ \ \ \ \ \ if \ }i=1-n, \\
0\text{ \ \ \ \ \ otherwise,}%
\end{array}%
\right.
\end{equation*}%
where $c_{j}$ $(1\leq j\leq k)$ are constant coefficients, $f_{k,n}^{\text{ }%
i}$ is the $n$-th term of $i$-th sequence of order $k$ generalization. For $%
c_{j}=1$, $k$-th sequence of this generalization involves the Miles
generalization(1) for $i=k,$ i.e.%
\begin{equation}
f_{k,n}^{k}=f_{k,k+n-2}.
\end{equation}

\bigskip

Kili\c{c}[5] defined $k$ sequences of generalized order-$k$ Pell numbers ($k$%
SO$k$P) as; for $n>0,$ $1\leq i\leq k$%
\begin{equation}
p_{k,n}^{\text{ }i}=2p_{k,n-1}^{\text{ }i}+p_{k,n-2}^{\text{
}i}+\cdots +\ p_{k,n-k}^{\text{ }i}\
\end{equation}%
with initial conditions for $1-k\leq n\leq 0,$

\begin{equation*}
p_{k,n}^{\text{ }i}=\left\{
\begin{array}{l}
1\text{ \ \ \ \ \ if \ }i=1-n, \\
0\text{ \ \ \ \ \ otherwise,}%
\end{array}%
\right.
\end{equation*}%
where $p_{k,n}^{\text{ }i}$ is the $n$-th term of $i$-th sequence of order $%
k $ generalization.

\bigskip

\bigskip We give general form of sequences mentioned above depending on $%
\lambda \in
\mathbb{Z}
^{+}$ and this sequences named as $k$ sequences of generalized
order-$k$
numbers ($k$SO$k$) as; for $n>0,$ $1\leq i\leq k$%
\begin{equation}
a_{k,n}^{\text{ }i}=\lambda a_{k,n-1}^{i}+a_{k,n-2}^{i}+\cdots +\
a_{k,n-k}^{i\text{ }}\
\end{equation}%
with initial conditions for $1-k\leq n\leq 0,$

\begin{equation*}
a_{k,n}^{i\text{ }}=\left\{
\begin{array}{l}
1\text{ \ \ \ \ \ if \ }i=1-n, \\
0\text{ \ \ \ \ \ otherwise,}%
\end{array}%
\right.
\end{equation*}%
where $a_{k,n}^{i\text{ }}$ is the $n$-th term of \ $i$-th sequences
of order-$k$ generalization.

\bigskip

Not that $f_{k,k+n-2},$ $f_{k,n}^{\text{ }i},$ $p_{k,n}^{\text{ }i}$ and $%
a_{k,n}^{\text{ }i}$ are GO$k$F, $k$SO$k$F, $k$SO$k$P and $k$SO$k$
respectively,

substituting $c_{j}=1$ in(2) and $\lambda =1$ in(5), for $1\leq
i\leq k$
we obtain%
\begin{equation*}
a_{k,n}^{\text{ }i}=f_{k,n}^{\text{ }i},
\end{equation*}%
\ \ substituting $\lambda =2$ in(5), for $1\leq i\leq k$ we obtain%
\begin{equation*}
a_{k,n}^{\text{ }i}=p_{k,n}^{\text{ }i}
\end{equation*}%
and substituting $\lambda =2$ in(5) we obtain
\begin{equation*}
a_{k,n}^{\text{ }k}=f_{k,k+n-2}.
\end{equation*}

\subsection{ Relation between $i$-th sequences and $k$-th sequences of
generalized order-$k$ numbers}

\begin{lem}
Let$\ a_{k,n}^{\text{ }i}$ be the $i$-th sequences of $k$SO$k$ for
$n>1-k$
and $1\leq i<k,$%
\begin{equation*}
a_{k,n}^{\text{ }i}=a_{k,n}^{\text{ }i+1}+\text{
}a_{k,n-k+i}^{\text{ }k}
\end{equation*}%
and since $a_{k,n-k+i}^{\text{ }k}=0$ for $1\leq n\leq k-i$ then $a_{k,n}^{%
\text{ }i}=a_{k,n}^{\text{ }i+1}.$
\end{lem}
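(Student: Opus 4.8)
The plan is to prove the identity by induction on $n$, exploiting the fact that the three sequences appearing in it all obey the same homogeneous linear recurrence (5). First I would observe that the re-indexed $k$-th sequence $\{a_{k,n-k+i}^{k}\}$, regarded as a sequence in $n$, again satisfies
\[
a_{k,n-k+i}^{k}=\lambda a_{k,n-k+i-1}^{k}+a_{k,n-k+i-2}^{k}+\cdots +a_{k,n-2k+i}^{k}
\]
for every $n$ with $n-k+i>0$, since this is merely (5) written for the index $n-k+i$. Consequently the quantity $a_{k,n}^{i}-a_{k,n}^{i+1}-a_{k,n-k+i}^{k}$ obeys the very same recurrence wherever all three of its terms do, so the whole statement reduces to checking it on a block of $k$ consecutive starting values and then letting (5) carry it forward.

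Next I would pin down the base cases from the explicit initial data. Using $a_{k,n}^{i}=1$ precisely when $i=1-n$, the difference $a_{k,n}^{i}-a_{k,n}^{i+1}$ equals $+1$ at $n=1-i$, equals $-1$ at $n=-i$, and vanishes at the remaining indices of the initial block, while the shifted term $a_{k,n-k+i}^{k}$ equals $1$ exactly when $n-k+i=1-k$, i.e. at $n=1-i$. Matching the locations of these nonzero entries is the crux: the single initial $1$ of the $k$-th sequence, sitting at index $1-k$, is transported by the shift $n\mapsto n-k+i$ to exactly the index $1-i$ where the $i$-th sequence carries its $1$, and this is what makes the identity hold.

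The inductive step is then routine algebra: for $n\ge 1$ I expand $a_{k,n}^{i}$ by (5), substitute the induction hypothesis into each term $a_{k,n-j}^{i}$, and regroup the resulting sums into $a_{k,n}^{i+1}$ (by (5) for the $(i+1)$-th sequence) together with $a_{k,n-k+i}^{k}$ (by the shifted recurrence recorded above). The closing assertion of the lemma falls out immediately: for $1\le n\le k-i$ the shifted index $n-k+i$ lies in $(1-k,0]$, strictly above the unique nonzero position $1-k$ of the $k$-th sequence, so $a_{k,n-k+i}^{k}=0$ and the identity collapses to $a_{k,n}^{i}=a_{k,n}^{i+1}$.

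The step I expect to be the main obstacle is the boundary bookkeeping rather than the manipulation. The shifted recurrence for the $k$-th sequence only becomes available once $n-k+i>0$, so on the low range $1\le n\le k-i$ one cannot propagate through (5) and must instead argue directly from the initial values (precisely the region handled in the last sentence). Moreover, for small $i$ the shift drives the index $n-k+i$ down to or below $1-k$ while $n$ still satisfies $n>1-k$; there the statement must be read with the sequences continued backward through (5), and verifying that this continuation restores the balance—for instance that the forced value at index $-k$ supplies exactly the $-1$ demanded by the difference $a_{k,n}^{i}-a_{k,n}^{i+1}$ at $n=-i$—is the genuinely delicate point. Keeping the ranges of validity of (5) for $a^{i}$, $a^{i+1}$ and the shifted $a^{k}$ aligned across the excluded endpoint $n=1-k$ is therefore the heart of the matter.
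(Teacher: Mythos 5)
Your strategy---observe that $d_{n}=a_{k,n}^{i}-a_{k,n}^{i+1}-a_{k,n-k+i}^{k}$ satisfies the common recurrence (5) and therefore vanishes identically once it vanishes on $k$ consecutive indices---is exactly the paper's strategy (the paper sets $t_{n}=a_{k,n}^{i}-a_{k,n}^{i+1}$ and matches its initial block against the shifted initial block of the $k$-th sequence). The problem is the base block, which you correctly flag as the delicate point but do not actually close; neither does the paper.

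Concretely: if you verify $d_{n}=0$ on the block $n\in[1-k,0]$, where $a^{i}$ and $a^{i+1}$ are read off from their initial data, then the third term runs through shifted indices down to $1-2k+i<1-k$ and must be continued backwards through (5); but that continuation gives $a_{k,-k}^{k}=-1$ and then $a_{k,-k-1}^{k}=+1$, so $d_{-i-1}=0-1\neq 0$ whenever $i\le k-2$, and the block is not a block of zeros. If instead you use the block $n\in[1-i,k-i]$, where the third term is given by its genuine initial data, you must prove $a_{k,n}^{i}=a_{k,n}^{i+1}$ for $1\le n\le k-i$, and this is not automatic from (5): for $i=1$ and $n=1$ the single initial $1$ of $a^{1}$ sits at index $0=n-1$ and is weighted by $\lambda$, while the $1$ of $a^{2}$ sits at $-1$ and is weighted by $1$, so $a_{k,1}^{1}=\lambda\neq 1=a_{k,1}^{2}$ unless $\lambda=1$. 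The identity itself therefore fails for $i=1$ when $\lambda\neq 1$; e.g.\ the paper's own Example 1.28 with $k=3$, $\lambda=2$ gives $a_{3,1}^{1}=2$ whereas $a_{3,1}^{2}+a_{3,-1}^{3}=1+0=1$. The paper's proof hides this by declaring the range $1\le n\le k-i$ ``obvious from (5),'' so your proposal reproduces both the paper's method and its hole. For $i\ge 2$ (where both initial $1$'s lie at indices $\le -1$ and receive coefficient $1$, so the check on $[1,k-i]$ goes through by a short induction), or for $\lambda=1$, your argument can be completed; as stated for all $1\le i<k$ and general $\lambda$ it cannot.
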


\begin{proof}
\bigskip It's obvious that for $1\leq n\leq k-i,$\ $a_{k,n}^{\text{ }%
i}=a_{k,n}^{\text{ }i+1}$ from (5).

Assume for\ $n>k-i,$ $a_{k,n}^{\text{ }i}-a_{k,n}^{\text{
}i+1}=t_{n}$ and show $t_{n}=$ $a_{k,n-k+i}^{\text{ }k}.$

First we obtain initial conditions for $t_{n}$ by using initial
conditions
of \ $i$-th and $(i+1)$-th sequences of $k$SO$k$ simultaneously as follows;%
\begin{equation*}
\begin{tabular}{|c|c|c|c|}
\hline
$n\setminus $ & $a_{k,n}^{\text{ }i}$ & $a_{k,n}^{\text{ }i+1}$ & $%
t_{n}=a_{k,n}^{\text{ }i}-a_{k,n}^{\text{ }i+1}$ \\ \hline $1-k$ &
$0$ & $0$ & $0$ \\ \hline $2-k$ & $0$ & $0$ & $0$ \\ \hline $\vdots
$ & $\vdots $ & $\vdots $ & $\vdots $ \\ \hline $-i-1$ & $0$ & $0$ &
$0$ \\ \hline $-i$ & $0$ & $1$ & $-1$ \\ \hline $-i+1$ & $1$ & $0$ &
$1$ \\ \hline $-i+2$ & $0$ & $0$ & $0$ \\ \hline $\vdots $ & $\vdots
$ & $\vdots $ & $\vdots $ \\ \hline
$0$ & $0$ & $0$ & $0$%
\end{tabular}%
\end{equation*}

Since initial conditions of $t_{n}$ are equal to initial condition of $%
a_{k,n}^{\text{ }k}$ with index iteration and since $t_{-i+1}=a_{k,1-k}^{%
\text{ }i}$ then we have%
\begin{equation*}
t_{n}=a_{k,n-k+i}^{\text{ }k}.
\end{equation*}
\end{proof}

\begin{thrm}
\bigskip Let$\ a_{k,n}^{\text{ }i}$ be the $i$-th sequences of $k$SO$k,$
then for $n\geq 1$ and $1\leq i\leq k$%
\begin{equation*}
a_{k,n}^{\text{ }i}=a_{k,n}^{\text{ }k}+a_{k,n-1}^{\text{ }k}+\cdots
+a_{k,n-k+i}^{\text{ }k}=\sum\limits_{m=1}^{k-i+1}a_{k,n-m+1}^{k}.
\end{equation*}
\end{thrm}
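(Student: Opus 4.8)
The plan is to prove the identity by downward induction on $i$, using the Lemma as the single recursive step. The statement to establish is
\[
a_{k,n}^{i}=\sum_{m=1}^{k-i+1}a_{k,n-m+1}^{k},
\]
and since the Lemma lets me rewrite $a_{k,n}^{i}$ in terms of $a_{k,n}^{i+1}$ plus one extra correction term, it provides exactly the mechanism to peel off one summand at a time.

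First I would dispose of the base case $i=k$. Here the upper limit of the sum is $k-i+1=1$, so the right-hand side collapses to the single term $a_{k,n}^{k}$, and the claimed identity $a_{k,n}^{k}=a_{k,n}^{k}$ holds trivially for every $n\geq 1$. For the inductive step I would fix $i$ with $1\leq i<k$ and assume the result for $i+1$, namely
\[
a_{k,n}^{i+1}=\sum_{m=1}^{k-i}a_{k,n-m+1}^{k}.
\]
Invoking the Lemma gives $a_{k,n}^{i}=a_{k,n}^{i+1}+a_{k,n-k+i}^{k}$, and substituting the inductive hypothesis yields
\[
a_{k,n}^{i}=\sum_{m=1}^{k-i}a_{k,n-m+1}^{k}+a_{k,n-k+i}^{k}.
\]
The decisive bookkeeping observation is that the extra term contributed by the Lemma is precisely the missing $(k-i+1)$-th summand: because $n-k+i=n-(k-i+1)+1$, we have $a_{k,n-k+i}^{k}=a_{k,n-(k-i+1)+1}^{k}$, so it extends the range of $m$ from $k-i$ up to $k-i+1$ and completes the induction.

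The argument is almost entirely mechanical, so the only point that genuinely needs care — and the step I expect to be the main (if modest) obstacle — is the index alignment at the end: one must verify that the shift $n-k+i$ appearing in the Lemma coincides with the summand index $n-m+1$ at $m=k-i+1$, and also confirm that the Lemma's hypotheses $n>1-k$ and $1\leq i<k$ hold throughout, which they do for $n\geq 1$ and $i<k$. As an alternative presentation, the same formula can be obtained by writing the Lemma's relation for each $j$ from $i$ to $k-1$ and summing, so that the $a_{k,n}^{j}$ contributions telescope and leave $a_{k,n}^{i}-a_{k,n}^{k}=\sum_{j=i}^{k-1}a_{k,n-k+j}^{k}$; I expect the inductive formulation to be the cleaner write-up, but the telescoping version makes the origin of each term in the sum especially transparent.
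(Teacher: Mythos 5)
Your proof is correct and is essentially the paper's argument: the authors write the Lemma's relation for each index from $i$ to $k-1$ and add the equations so the left sides telescope, which is exactly the alternative presentation you mention, and your downward induction on $i$ is just a repackaging of that same telescoping step. The index check $n-k+i = n-(k-i+1)+1$ and the base case $i=k$ are handled correctly.
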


\begin{proof}
\bigskip From Lemma(1.1) $a_{k,n}^{\text{ }i}-a_{k,n}^{\text{ }i+1}=$ $%
a_{k,n-k+i}^{\text{ }k}.$ Using this equation we write

\begin{eqnarray*}
a_{k,n}^{\text{ }i}-a_{k,n}^{\text{ }i+1} &=&a_{k,n-k+i}^{\text{ }k} \\
a_{k,n}^{\text{ }i+1}-a_{k,n}^{\text{ }i+2} &=&a_{k,n-k+i+1}^{\text{ }k} \\
&&\vdots \\
a_{k,n}^{\text{ }k-1}-a_{k,n}^{\text{ }k} &=&a_{k,n-1}^{\text{ }k}
\end{eqnarray*}%
and adding these equations side by side we obtain%
\begin{equation*}
a_{k,n}^{\text{ }i}-a_{k,n}^{\text{ }k}=a_{k,n-1}^{\text{ }k}+\cdots
+a_{k,n-k+i}^{\text{ }k}
\end{equation*}%
and so
\begin{equation*}
a_{k,n}^{\text{ }i}=a_{k,n}^{\text{ }k}+a_{k,n-1}^{\text{ }k}+\cdots
+a_{k,n-k+i}^{\text{ }k}
\end{equation*}%
which completes the proof.
\end{proof}

\bigskip

\begin{cor}
\bigskip Let $f_{k,n}^{\text{ }i}$ and $\ p_{k,n}^{\text{ }i}$ be the $i$-th
sequences of $k$SO$k$F and $k$SO$k$P respectively. Then, for $1\leq
i<k$
\begin{equation*}
f_{k,n}^{\text{ }i}=f_{k,n}^{\text{ }i+1}+\text{
}f_{k,n-k+i}^{\text{ }k}
\end{equation*}%
and%
\begin{equation*}
p_{k,n}^{\text{ }i}=p_{k,n}^{\text{ }i+1}+\text{
}p_{k,n-k+i}^{\text{ }k}.
\end{equation*}
\end{cor}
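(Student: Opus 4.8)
The plan is to obtain both identities as immediate specializations of Lemma 1.1, exploiting the fact that the $k$SO$k$F and $k$SO$k$P numbers are particular instances of the general $k$SO$k$ numbers. The introduction has already established that setting $\lambda = 1$ in the defining recurrence (5) yields $a_{k,n}^{i} = f_{k,n}^{i}$, while setting $\lambda = 2$ yields $a_{k,n}^{i} = p_{k,n}^{i}$; in both cases the boundary data of (5) coincide with those of (2) and (4), so the identifications hold for every admissible $n$ and every superscript $1 \leq j \leq k$.

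First I would recall the statement of Lemma 1.1, namely that $a_{k,n}^{i} = a_{k,n}^{i+1} + a_{k,n-k+i}^{k}$ holds for all $n > 1-k$ and $1 \leq i < k$. The crucial point is that this identity is valid for \emph{every} $\lambda \in \mathbb{Z}^{+}$, since the proof of the lemma nowhere uses a specific value of $\lambda$. Consequently it holds in particular for the two values $\lambda = 1$ and $\lambda = 2$ that recover the Fibonacci and Pell cases.

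Next I would carry out the substitution $\lambda = 1$, replacing each occurrence of $a_{k,n}^{j}$ in the lemma by the corresponding $f_{k,n}^{j}$; this produces $f_{k,n}^{i} = f_{k,n}^{i+1} + f_{k,n-k+i}^{k}$. Substituting $\lambda = 2$ instead converts each $a_{k,n}^{j}$ into $p_{k,n}^{j}$ and gives $p_{k,n}^{i} = p_{k,n}^{i+1} + p_{k,n-k+i}^{k}$, completing both claims.

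The only point requiring a moment's care — and it is minor — is confirming that the term-by-term replacement is legitimate for \emph{all} indices that appear, including the shifted index $n - k + i$ on the right-hand side. Since the identifications $a_{k,n}^{j} = f_{k,n}^{j}$ and $a_{k,n}^{j} = p_{k,n}^{j}$ were established for the full range of indices rather than only for $n \geq 1$, no shifted term falls outside the valid range, and I expect no genuine obstacle here. Thus the corollary is essentially a restatement of Lemma 1.1 under the two parameter choices.
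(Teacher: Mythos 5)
Your proof is correct, and it is marginally different in mechanism from what the paper does: the paper's proof reads ``Proof is similar to Lemma (1.1),'' i.e.\ it re-runs the argument of the lemma (set $t_n=f_{k,n}^{\,i}-f_{k,n}^{\,i+1}$, check that the initial data match those of the $k$-th sequence with shifted index, conclude) separately for the Fibonacci and Pell cases, whereas you invoke the lemma once and obtain both identities by the specializations $\lambda=1$ and $\lambda=2$ under the identifications $a_{k,n}^{\,j}=f_{k,n}^{\,j}$ and $a_{k,n}^{\,j}=p_{k,n}^{\,j}$ already recorded in the introduction. Your route is cleaner and avoids duplicating the induction; what the paper's route buys is independence from the identification step, which matters only in one respect: the identification $a_{k,n}^{\,j}=f_{k,n}^{\,j}$ at $\lambda=1$ requires the coefficients $c_j$ in Er's definition (2) to all equal $1$, so your argument covers the corollary only under that convention (which is the one the paper uses throughout), while re-running the lemma's proof directly would in fact establish the Fibonacci identity for arbitrary $c_j$ as well. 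Your closing remark about the shifted index $n-k+i$ is well taken and correctly resolved, since the identifications hold for all $n\geq 1-k$.
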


\begin{proof}
\bigskip Proof is similar to Lemma(1.1)\bigskip
\end{proof}

\bigskip

\begin{cor}
Let $f_{k,n}^{\text{ }i}$ and $\ p_{k,n}^{\text{ }i}$ be the $i$-th
sequences of $k$SO$k$F and $k$SO$k$P respectively then, for $n\geq 1$ and $%
1\leq i\leq k$%
\begin{equation*}
f_{k,n}^{\text{ }i}=f_{k,n}^{\text{ }k}+f_{k,n-1}^{\text{ }k}+\cdots
+f_{k,n-k+i}^{\text{ }k}=\sum\limits_{m=1}^{k-i+1}f_{k,n-m+1}^{k}
\end{equation*}%
and%
\begin{equation*}
p_{k,n}^{\text{ }i}=p_{k,n}^{\text{ }k}+p_{k,n-1}^{\text{ }k}+\cdots
+p_{k,n-k+i}^{\text{ }k}=\sum\limits_{m=1}^{k-i+1}p_{k,n-m+1}^{k}.
\end{equation*}
\end{cor}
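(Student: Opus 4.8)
The plan is to obtain this corollary as an immediate specialization of Theorem 1.2, so that essentially no new argument is required. Recall from the discussion following equation (5) that both families of interest are particular cases of the general $k$SO$k$ numbers: substituting $\lambda = 1$ in (5) gives $a_{k,n}^{i} = f_{k,n}^{i}$, and substituting $\lambda = 2$ gives $a_{k,n}^{i} = p_{k,n}^{i}$, valid for all $1 \leq i \leq k$ and all relevant $n$. Since Theorem 1.2 is proved for $a_{k,n}^{i}$ with $\lambda$ an arbitrary positive integer, the two asserted identities should fall out by feeding in these two values of $\lambda$.

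Concretely, I would first recall the conclusion of Theorem 1.2 in the form
\[
a_{k,n}^{i} = \sum_{m=1}^{k-i+1} a_{k,n-m+1}^{k},
\]
which holds for every $\lambda \in \mathbb{Z}^{+}$, $n \geq 1$, and $1 \leq i \leq k$. Setting $\lambda = 1$ throughout and rewriting each $a_{k,\cdot}^{\cdot}$ as the corresponding $f_{k,\cdot}^{\cdot}$ yields the first displayed formula; setting $\lambda = 2$ and rewriting each $a_{k,\cdot}^{\cdot}$ as $p_{k,\cdot}^{\cdot}$ yields the second. Because the summation index $m$ runs over $1 \leq m \leq k-i+1$ in both cases, the index ranges on the right-hand sides match those in the statement automatically.

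The only point needing verification --- and it is the mild \emph{obstacle} here --- is that the identifications $a = f$ (for $\lambda = 1$) and $a = p$ (for $\lambda = 2$) remain valid across the entire index range appearing in the sums, including the boundary block $1-k \leq n \leq 0$. This is immediate once one observes that the boundary conditions for $f_{k,n}^{i}$ in (2) (with $c_{j}=1$), for $p_{k,n}^{i}$ in (4), and for $a_{k,n}^{i}$ in (5) are all prescribed by the very same rule (the value is $1$ when $i = 1-n$ and $0$ otherwise), and that the recurrences coincide under the respective choices of $\lambda$. Hence the substitution is legitimate term by term, and the proof is complete with no further computation.

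Alternatively, one could bypass the specialization entirely and reprove the identity directly for $f_{k,n}^{i}$ and $p_{k,n}^{i}$ by telescoping the one-step relations supplied in Corollary 1.3, exactly as the sum of side-by-side differences was formed in the proof of Theorem 1.2. I expect the specialization route to be the shorter of the two, so I would present that as the main argument and merely remark that the telescoping argument of Corollary 1.3 reproduces the same conclusion.
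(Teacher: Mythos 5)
Your proposal is correct. The only substantive difference from the paper is the mechanism of reduction: the paper's proof is the one-line remark that the argument is ``similar to Theorem 1.2,'' i.e.\ it implicitly re-runs the telescoping of the one-step relations (Corollary 1.3) with $f$ and $p$ in place of $a$, whereas you obtain the corollary as a literal specialization of Theorem 1.2 via the identifications $a_{k,n}^{i}=f_{k,n}^{i}$ at $\lambda=1$ (with $c_j=1$) and $a_{k,n}^{i}=p_{k,n}^{i}$ at $\lambda=2$, which the paper records immediately after equation (5). Your route is the shorter and cleaner of the two, since Theorem 1.2 is already proved for arbitrary $\lambda\in\mathbb{Z}^{+}$ and the recurrences and boundary conditions coincide term by term under the stated substitutions --- a point you rightly verify rather than assume. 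Your closing remark that the telescoping of Corollary 1.3 gives an alternative derivation is exactly the paper's intended argument, so nothing is lost either way.
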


\bigskip

\begin{proof}
Proof is similar in Theorem(1.2)\bigskip
\end{proof}

\bigskip

\begin{exam}
Let us obtain $p_{k,n}^{i}$ for $k=5$, $n=10$ and $i=2$ by using
Corollary
(1.4).%
\begin{eqnarray*}
p_{k,n}^{\text{ }i} &=&p_{k,n}^{\text{ }k}+p_{k,n-1}^{\text{
}k}+\cdots
+p_{k,n-k+1}^{\text{ }k} \\
p_{5,10}^{\text{ }2} &=&p_{5,10}^{\text{ }5}+p_{5,9}^{\text{ }5}+p_{5,8}^{%
\text{ }5}+p_{5,7}^{\text{ }5} \\
&=&4116+1578+605+232 \\
&=&6531
\end{eqnarray*}
\end{exam}

\bigskip

Theorem(1.2) and Corollary(1.4) are important, because there are a
lot of studies on $k-th$ sequences of $k$SO$k$F and $k$SO$k$P(which
are called generalized order-$k$ sequences in some papers[3,5,6]).
Our relations allows to translate these studies to $i-th$ sequences.

\bigskip

\subsection{The determinantal representations}

An $n\times n$ matrix $A_{n}=(a_{ij})$ is called lower Hessenberg matrix if $%
a_{ij}=0$ when $j-i>1$ i.e.,%
\begin{equation*}
A_{n}=\left[
\begin{array}{ccccc}
a_{11} & a_{12} & 0 & \cdots & 0 \\
a_{21} & a_{22} & a_{23} & \cdots & 0 \\
a_{31} & a_{32} & a_{33} & \cdots & 0 \\
\vdots & \vdots & \vdots &  & \vdots \\
a_{n-1,1} & a_{n-1,2} & a_{n-1,3} & \cdots & a_{n-1,n} \\
a_{n,1} & a_{n,2} & a_{n,3} & \cdots & a_{n,n}%
\end{array}%
\right]
\end{equation*}

\begin{thrm}
$\bigskip \lbrack 2]$ $A_{n}$ be the $n\times n$ lower Hessenberg
matrix for
all $n\geq 1$ and define $\det (A_{0})=1,$ then,%
\begin{equation*}
\det (A_{1})=a_{11}
\end{equation*}%
and for $n\geq 2$%
\begin{equation}
\det (A_{n})=a_{n,n}\det
(A_{n-1})+\sum\limits_{r=1}^{n-1}((-1)^{n-r}a_{n,r}\prod%
\limits_{j=r}^{n-1}a_{j,j+1}\det (A_{r-1})).
\end{equation}
\end{thrm}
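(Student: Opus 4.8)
The plan is to prove the identity directly by Laplace (cofactor) expansion of $\det(A_n)$ along its last row, and then to evaluate each resulting minor by exploiting the Hessenberg structure. Expanding along row $n$ gives $\det(A_n)=\sum_{r=1}^{n}(-1)^{n+r}a_{n,r}M_{n,r}$, where $M_{n,r}$ is the determinant of the $(n-1)\times(n-1)$ submatrix obtained by deleting row $n$ and column $r$. Since $(-1)^{n+r}=(-1)^{n-r}$, the signs already agree with those in the claimed formula, so everything reduces to identifying each minor $M_{n,r}$.

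The central step is to show that, for $1\le r\le n-1$, the minor factors as $M_{n,r}=\det(A_{r-1})\prod_{j=r}^{n-1}a_{j,j+1}$. To see this, I would partition the submatrix using the columns $r+1,\dots,n$ (those surviving to the right of the deleted column). Because $a_{ij}=0$ whenever $j-i>1$, every row $i$ with $i\le r-1$ satisfies $i+1\le r$, so all its entries in columns $r+1,\dots,n$ vanish; hence the submatrix is block lower triangular, of the form $\left(\begin{smallmatrix}B&0\\C&D\end{smallmatrix}\right)$ with an $(r-1)\times(r-1)$ top-left block and an $(n-r)\times(n-r)$ bottom-right block. The top-left block is exactly $A_{r-1}$, contributing $\det(A_{r-1})$.

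It remains to evaluate the bottom-right block $D$, built from rows $r,\dots,n-1$ and columns $r+1,\dots,n$. I would check that $D$ is itself lower triangular: its $(p,q)$ entry is the $A_n$-entry in row $r+p-1$ and column $r+q$, and for $q>p$ the column-minus-row index equals $q-p+1\ge 2$, which forces that entry to be zero by the Hessenberg condition. Thus $\det(D)$ is the product of its diagonal entries $a_{r+p-1,\,r+p}$, which upon reindexing is exactly $\prod_{j=r}^{n-1}a_{j,j+1}$. Substituting this factorization back into the expansion and separating the $r=n$ term (whose minor is $\det(A_{n-1})$ and whose trailing product is empty, hence $1$) yields the stated recursion, with the base cases $\det(A_0)=1$ and $\det(A_1)=a_{11}$ immediate.

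The main obstacle is the minor computation in the middle step: one must track carefully that the two index ranges (the surviving columns versus the retained rows) line up so that the vanishing pattern really produces a block-triangular matrix with a lower-triangular corner. Once the Hessenberg zero pattern is followed precisely through the deletion of row $n$ and column $r$, the sign bookkeeping and the empty-product conventions are routine.
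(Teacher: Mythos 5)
Your proof is correct: the Laplace expansion along the last row, combined with the observation that deleting row $n$ and column $r$ leaves a block lower triangular matrix whose top-left block is $A_{r-1}$ and whose bottom-right block is lower triangular with diagonal entries $a_{j,j+1}$, $j=r,\dots,n-1$, gives exactly the stated recursion, and your index bookkeeping (in particular $q-p+1\ge 2$ forcing the zeros in $D$) checks out. Note, however, that the paper offers no proof of this statement to compare against --- it is quoted verbatim from reference [2] (Cahill et al.) --- so your argument stands on its own as a complete, self-contained derivation of the cited expansion formula.
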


\bigskip

\begin{thrm}
\bigskip Let $k\geq 2$ be an integer$,$ $a_{k,n}^{\text{ }k}$ be the $k$-th
sequences of $k$SO$k$ and $Q_{k,n}=(q_{st})$ $n\times n$ Hessenberg
matrix$,$
where%
\begin{equation*}
q_{st}=\left\{
\begin{array}{l}
i^{\left\vert s-t\right\vert }\text{ \ \ \ \ \ \ \ \ \ \ \ \ \ \ \ \
\ \ if
\ }-1\leq s-t<k\text{ and }s\neq t, \\
\lambda \text{ \ \ \ \ \ \ \ \ \ \ \ \ \ \ \ \ \ \ \ \ \ \ if\ \ \ \
\ \ \ \ \ \ \ }s=t\text{\ \ ,\ \ \ \ \ \ \ \ \ \ \ \ \ \ \ \ \ \ \ \
\ \ \ \ \ \ \ }
\\
0\text{ \ \ \ \ \ \ \ \ \ \ \ \ \ \ \ \ \ \ \ \ \ \ otherwise\ \ \ \
\ \ \ \
\ \ \ \ \ \ \ \ \ \ \ \ \ \ \ \ \ \ \ \ \ \ \ \ \ \ }%
\end{array}%
\right.
\end{equation*}%
i.e.,%
\begin{equation}
Q_{k,n}=\left[
\begin{array}{cccccc}
\lambda & i & 0 & 0 & \cdots & 0 \\
i & \lambda & i & 0 & \cdots & 0 \\
i^{2} & i & \lambda & i & \cdots & 0 \\
\vdots & \vdots & \vdots & \vdots &  & \vdots \\
i^{k-1} & i^{k-2} & i^{k-3} & i^{k-4} & \cdots & 0 \\
0 & i^{k-1} & i^{k-2} & i^{k-3} & \cdots & 0 \\
& \vdots & \vdots & \vdots & \ddots &  \\
0 & 0 & 0 & \cdots & i & \lambda%
\end{array}%
\right]
\end{equation}%
then%
\begin{equation*}
\det (Q_{k,n})=a_{k,n+1}^{\text{ }k}
\end{equation*}%
where $i=\sqrt{-1}.$
\end{thrm}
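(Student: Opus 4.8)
The plan is to invoke the lower Hessenberg determinant expansion of Theorem~1.6 and show that $D_n := \det(Q_{k,n})$ obeys exactly the $k$SO$k$ recurrence~(5), so that a strong induction matched against the boundary conditions identifies $D_n$ with $a_{k,n+1}^{\text{ }k}$. Since $q_{st}=0$ whenever $t-s>1$, the matrix $Q_{k,n}$ is indeed lower Hessenberg and the formula applies. First I would record the relevant entries: the diagonal entry is $q_{n,n}=\lambda$, each superdiagonal entry is $q_{j,j+1}=i$, and for $r<n$ the subdiagonal entry is $q_{n,r}=i^{\,n-r}$ when $1\le n-r\le k-1$ while $q_{n,r}=0$ when $n-r\ge k$. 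Substituting these into formula~(6) gives
\begin{equation*}
\det(Q_{k,n})=\lambda\det(Q_{k,n-1})+\sum_{r=\max(1,\,n-k+1)}^{n-1}(-1)^{n-r}\,i^{\,n-r}\Big(\prod_{j=r}^{n-1}i\Big)\det(Q_{k,r-1}).
\end{equation*}

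The decisive simplification is that $\prod_{j=r}^{n-1}i=i^{\,n-r}$, so the imaginary factors combine with the sign into $(-1)^{n-r}\,i^{2(n-r)}=(-1)^{n-r}(-1)^{n-r}=1$. Every surviving summand therefore collapses to $\det(Q_{k,r-1})$, and after the reindexing $m=r-1$ I obtain the clean relation
\begin{equation*}
D_n=\lambda D_{n-1}+\sum_{m=\max(0,\,n-k)}^{n-2}D_m,
\end{equation*}
which for $n\ge k$ reads $D_n=\lambda D_{n-1}+D_{n-2}+\cdots+D_{n-k}$, precisely the recurrence~(5) for $a_{k,n+1}^{\text{ }k}$ under the index shift $n\mapsto n+1$.

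It then remains to check the initial data and run the induction. Using $\det(Q_{k,0})=1$ from Theorem~1.6 and evaluating the boundary conditions of~(5) for $i=k$ (where $a_{k,n}^{\text{ }k}=1$ only at $n=1-k$), I would verify $D_0=1=a_{k,1}^{\text{ }k}$ and $D_1=\lambda=a_{k,2}^{\text{ }k}$, then finish by strong induction on $n$. The step I expect to require the most care is the truncation of the sum for small $n$: when $n<k$ the vanishing of the entries $q_{n,r}$ with $n-r\ge k$ pushes the lower summation limit up to $r=1$, and one must confirm that this matches the corresponding vanishing of the boundary terms $a_{k,n+1-k}^{\text{ }k},\dots,a_{k,0}^{\text{ }k}$ in the sequence recurrence, so that the truncated determinant recurrence and the truncated sequence recurrence agree term by term before the full-length recurrence takes over at $n\ge k$.
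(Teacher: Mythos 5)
Your proposal is correct and follows essentially the same route as the paper: apply the lower Hessenberg expansion of Theorem~1.6, observe that $(-1)^{n-r}i^{n-r}\cdot i^{n-r}=1$ so the surviving terms collapse to $\det(Q_{k,r-1})$, and identify the resulting recurrence with~(5) by induction. Your explicit attention to the truncation of the sum for $n<k$ and to the base cases $D_0=a_{k,1}^{\,k}$, $D_1=a_{k,2}^{\,k}$ is in fact slightly more careful than the paper's write-up, which glosses over these points.
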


\begin{proof}
\bigskip Proof is by mathematical induction on $n$. The result is true for $%
n=1$ by hypothesis.

Assume that it is true for all positive integers less than or equal
to $m,$ namely $\det (Q_{k,m})=a_{k,m+1}^{\text{ }k}.$ Using Theorem
(1.6) we have
\begin{eqnarray*}
\det (Q_{k,m+1}) &=&q_{m+1,m+1}\det
(Q_{k,m})+\sum\limits_{r=1}^{m}((-1)^{m+1-r}q_{m+1,r}\prod%
\limits_{j=r}^{m}q_{j,j+1}\det (Q_{k,r-1})) \\
&=&\lambda \det
(Q_{k,m})+\sum\limits_{r=1}^{m-k+1}((-1)^{m+1-r}q_{m+1,r}\prod%
\limits_{j=r}^{m}q_{j,j+1}\det (Q_{k,r-1})) \\
&&+\sum\limits_{r=m-k+2}^{m}((-1)^{m+1-r}q_{m+1,r}\prod%
\limits_{j=r}^{m}q_{j,j+1}\det (Q_{k,r-1})) \\
&=&\lambda \det
(Q_{k,m})+\sum\limits_{r=m-k+2}^{m}((-1)^{m+1-r}i^{\left\vert
m+1-r\right\vert }\prod\limits_{j=r}^{m}i^{\left\vert
j-j-1\right\vert }\det
(Q_{k,r-1})) \\
&=&\lambda \det
(Q_{k,m})+\sum\limits_{r=m-k+2}^{m}((-1)^{m+1-r}i^{m+1-r}\prod%
\limits_{j=r}^{m}i\det (Q_{k,r-1})) \\
&=&\lambda \det
(Q_{k,m})+\sum\limits_{r=m-k+2}^{m}((-1)^{m+1-r}i^{m+1-r}i^{m+1-r}\det
(Q_{k,r-1})) \\
&=&\lambda \det (Q_{k,m})+\sum\limits_{r=m-k+2}^{m}\det (Q_{k,r-1}) \\
&=&\lambda \det (Q_{k,m})+\det (Q_{k,m-1})+\cdots +\det
(Q_{k,m-(k-1)})
\end{eqnarray*}%
From the hypothesis and the definition of $k-th$ sequences of
$k$SO$k$ we
obtain%
\begin{equation*}
\det (Q_{k,m+1})=\lambda a_{k,m+1}^{\text{ }k}+a_{k,m}^{\text{
}k}+\cdots +\ a_{k,m-k+2}^{\text{ }k}=a_{k,m+2}^{\text{ }k}.
\end{equation*}%
Therefore, the result is true for all non-negative integers.
\end{proof}

\bigskip

\begin{thrm}
\bigskip \bigskip Let $k\geq 2$ be an integer, $a_{k,n}^{\text{ }i}$ be the $%
i$-th sequences of $k$SO$k$ and $Q_{k,n+1}^{i}$ be Hessenberg matrix as;%
\begin{equation*}
Q_{k,n+1}^{i}=\left[
\begin{array}{ccccc}
1 & i & 0 & \cdots & 0 \\
i &  &  &  &  \\
i^{2} &  &  &  &  \\
\vdots &  &  &  &  \\
i^{k-i} &  & Q_{k,n} &  &  \\
0 &  &  &  &  \\
\vdots &  &  &  &  \\
0 &  &  &  &
\end{array}%
\right] _{(n+1)\times (n+1)}
\end{equation*}%
where $Q_{k,n}$ is as (7), then for $2\leq i\leq k$
\begin{equation*}
\det (Q_{k,n}^{i})=a_{k,n}^{\text{ }i}.
\end{equation*}
\end{thrm}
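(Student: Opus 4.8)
The plan is to prove the equivalent (and cleaner) statement $\det(Q_{k,m}^{i})=a_{k,m}^{i}$ for \emph{every} $m\ge 1$, where $Q_{k,m}^{i}$ denotes the $m\times m$ matrix of the displayed form, so that the matrix written as $Q_{k,n+1}^{i}$ is the case $m=n+1$; I would run an induction on $m$ that parallels the proof of Theorem (1.7). First I would record the three structural features of $Q_{k,m}^{i}$ on which everything hinges: (a) every superdiagonal entry equals $i$, since the $(1,2)$ entry is $i$ and the superdiagonal of the embedded Hessenberg block is $i$; (b) the main diagonal is $(1,\lambda,\dots,\lambda)$; and (c) the last row has $b_{m,m}=\lambda$ on the diagonal, band entries $b_{m,r}=i^{\,m-r}$ for $m-k+1\le r\le m-1$, and a first-column entry $b_{m,1}=i^{\,m-1}$ present exactly when $m\le k-i+1$ (and $b_{m,1}=0$ otherwise). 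The base case $m=1$ is immediate, since $Q_{k,1}^{i}=[1]$ and $a_{k,1}^{i}=1$.

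For the inductive step I would apply the Hessenberg expansion of Theorem (1.6) along the last row of $Q_{k,m}^{i}$, observing that the leading principal $t\times t$ submatrix is again $Q_{k,t}^{i}$, so each minor that appears is $\det(Q_{k,t}^{i})$. The diagonal term contributes $\lambda\det(Q_{k,m-1}^{i})$, and for each surviving $r$ the coefficient collapses exactly as in Theorem (1.7):
\[
(-1)^{m-r}\,b_{m,r}\prod_{j=r}^{m-1}b_{j,j+1}=(-1)^{m-r}\,i^{\,m-r}\,i^{\,m-r}=(-1)^{m-r}(i^{2})^{\,m-r}=1 .
\]
Hence $\det(Q_{k,m}^{i})=\lambda\det(Q_{k,m-1}^{i})+\sum_{r}\det(Q_{k,r-1}^{i})$, the sum ranging over those $r$ with nonzero last-row entry. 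By the induction hypothesis each $\det(Q_{k,r-1}^{i})$ equals $a_{k,r-1}^{i}$, so the right-hand side is precisely that of the defining recurrence (5), and the step closes.

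The delicate point, which I expect to be the main obstacle, is the bookkeeping of the boundary terms for small $m$ (namely $m\le k$), where the last-row expansion reaches column $1$ and meets the decorated entries. There the band range is truncated and an \emph{extra} term arising from $b_{m,1}=i^{\,m-1}$ appears exactly when $m\le k-i+1$; by the same $i^{2}=-1$ collapse this extra term equals $\det(Q_{k,0}^{i})=1$. I would then check that this matches recurrence (5) read together with the initial data ($a_{k,j}^{i}=1$ iff $i=1-j$, and $0$ otherwise, for $1-k\le j\le 0$): the low-index terms of the recurrence contribute $1$ precisely when $m\le k-i+1$, i.e. exactly when the determinant's extra term is present, and $0$ otherwise. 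Verifying that these two boundary contributions agree term for term, and that no spurious $r$ survives, is the only place where care beyond the Theorem (1.7) computation is required; once it is settled the induction is uniform in $m$ and rests on the single base case $m=1$. As a consistency check one can also confirm the outcome against Theorem (1.2) and Theorem (1.7), which together give $a_{k,m}^{i}=\sum_{j}a_{k,m-j+1}^{k}=\sum_{j}\det(Q_{k,m-j})$.
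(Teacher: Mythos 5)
Your proposal is correct and follows essentially the same route as the paper, which proves this theorem by the remark ``similar to the proof of Theorem (1.7)'', i.e.\ induction on the matrix size via the Hessenberg expansion of Theorem (1.6) along the last row, with each coefficient collapsing through $(-1)^{m-r}(i^{2})^{m-r}=1$. Your explicit bookkeeping of the truncated band for $m\le k$ and of the extra $\det(Q_{k,0}^{i})=1$ term appearing exactly when $m\le k-i+1$ (matching the initial condition $a_{k,1-i}^{i}=1$) is precisely the detail the paper leaves implicit, and it checks out.
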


\begin{proof}
Proof \ is similar to the proof of Theorem (1.7).
\end{proof}

\bigskip

\begin{thrm}
\bigskip Let $k\geq 2$ be an integer$,$ $a_{k,n}^{\text{ }k}$ be the $k$-th
sequences of $k$SO$k$ and $B_{k,n}=(b_{ij})$ be an $n\times n$ lower
Hessenberg matrix such that%
\begin{equation*}
b_{ij}=\left\{
\begin{array}{l}
-1\text{ \ \ \ \ \ \ \ \ \ \ \ if \ \ \ }j=i+1, \\
1\text{\ \ \ \ \ \ \ \ \ \ \ \ \ \ if\ \ \ \ \ }0\leq i-j<k\text{
and }i\neq
j \\
\lambda \text{\ \ \ \ \ \ \ \ \ \ \ \ \ \ if \ \ \ \ \ \ \ \ \ \ \ \
\ \ \ \
\ \ \ \ \ }i=j \\
0\text{\ \ \ \ \ \ \ \ \ \ \ \ \ \ otherwise}%
\end{array}%
\right.
\end{equation*}%
i.e.,%
\begin{equation}
B_{k,n}=\left[
\begin{array}{cccccc}
\lambda & -1 & 0 & 0 & \cdots & 0 \\
1 & \lambda & -1 & 0 & \cdots & 0 \\
1 & 1 & \lambda & -1 & \cdots & 0 \\
\vdots & \vdots & \vdots & \vdots &  & \vdots \\
1 & 1 & \cdots & 1 & \cdots & 0 \\
0 & 1 & \cdots & 1 & \cdots & 0 \\
\vdots & \vdots & \vdots & \vdots & \ddots & \vdots \\
0 & 0 & \cdots & 1 & \cdots & \lambda%
\end{array}%
\right]
\end{equation}%
then%
\begin{equation*}
\det (B_{k,n})=a_{k,n+1}^{\text{ }k}.
\end{equation*}
\end{thrm}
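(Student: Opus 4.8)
The plan is to argue by induction on $n$, in direct parallel with the proof of Theorem (1.7), since $B_{k,n}$ is again lower Hessenberg and Theorem (1.6) supplies the determinant expansion (6). For the base case $n=1$ one has $B_{k,1}=[\lambda]$, so $\det(B_{k,1})=\lambda$; reading off the initial conditions in (5) gives $a_{k,1}^{k}=1$ and hence $a_{k,2}^{k}=\lambda$, so the claim holds for $n=1$.

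For the inductive step I would assume $\det(B_{k,m})=a_{k,m+1}^{k}$ for all indices up to $m$ and apply the expansion (6) to $\det(B_{k,m+1})$. The leading term is $b_{m+1,m+1}\det(B_{k,m})=\lambda\det(B_{k,m})$. In the remaining sum the only nonzero coefficients $b_{m+1,r}$ occur for $m-k+2\le r\le m$, where $b_{m+1,r}=1$; the terms with $r\le m+1-k$ drop out because those entries lie outside the band and equal $0$. This is exactly the band structure that will regenerate the $k$SO$k$ recurrence.

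The step I expect to require the most care is the sign bookkeeping, which here plays the role that the cancellation of powers of $i=\sqrt{-1}$ played in Theorem (1.7). For each surviving $r$ the superdiagonal product $\prod_{j=r}^{m}b_{j,j+1}$ is a product of $m-r+1$ copies of $-1$, so it equals $(-1)^{m-r+1}$; multiplying by the explicit factor $(-1)^{m+1-r}$ from (6) yields $(-1)^{m+1-r}(-1)^{m-r+1}=(-1)^{2(m+1-r)}=1$. Hence every surviving term collapses to $\det(B_{k,r-1})$, and the expansion reduces to
\[
\det(B_{k,m+1})=\lambda\det(B_{k,m})+\sum_{r=m-k+2}^{m}\det(B_{k,r-1}).
\]

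Finally I would substitute the inductive hypothesis, which turns the right-hand side into $\lambda a_{k,m+1}^{k}+a_{k,m}^{k}+\cdots+a_{k,m-k+2}^{k}$, namely $\lambda$ times $a_{k,m+1}^{k}$ together with the preceding $k-1$ terms. This is precisely the defining recurrence (5) for $a_{k,m+2}^{k}$, so $\det(B_{k,m+1})=a_{k,m+2}^{k}$ and the induction closes. For small $m$ (when $m-k+2\le 0$) the lower summation limit is replaced by $1$ and the convention $\det(B_{k,0})=1$ from Theorem (1.6) is used, exactly as in the proof of Theorem (1.7).
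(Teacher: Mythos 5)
Your proposal is correct and follows essentially the same route as the paper: induction on $n$ using the Hessenberg expansion of Theorem (1.6), with the split of the sum at $r=m-k+2$ and the sign cancellation $(-1)^{m+1-r}(-1)^{m-r+1}=1$ reducing each surviving term to $\det(B_{k,r-1})$, after which the $k$SO$k$ recurrence closes the induction. Your treatment of the base case and of the small-$m$ boundary is in fact slightly more explicit than the paper's.
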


\begin{proof}
\bigskip Proof is by mathematical induction on $n$. The result is true for $%
n=1$ by hypothesis.

Assume that it is true for all positive integers less than or equal
to $m,$ namely $\det (B_{k,m})=a_{k,m+1}^{\text{ }k}.$ Using Theorem
(1.6) we have
\begin{eqnarray*}
\det (B_{m+1,k}) &=&b_{m+1,m+1}\det
(B_{k,m})+\sum\limits_{r=1}^{m}((-1)^{m+1-r}b_{m+1,r}\prod%
\limits_{j=r}^{m}b_{j,j+1}\det (B_{r-1,k})) \\
&=&\lambda \det
(B_{k,m})+\sum\limits_{r=1}^{m-k+1}((-1)^{m+1-r}b_{m+1,r}\prod%
\limits_{j=r}^{m}b_{j,j+1}\det (B_{r-1,k})) \\
&&+\sum\limits_{r=m-k+2}^{m}((-1)^{m+1-r}b_{m+1,r}\prod%
\limits_{j=r}^{m}b_{j,j+1}\det (B_{r-1,k})) \\
&=&\lambda \det
(B_{k,m})+\sum\limits_{r=m-k+2}^{m}((-1)^{m+1-r}\prod\limits_{j=r}^{m}(-1)%
\det (B_{r-1,k})) \\
&=&\lambda \det (B_{k,m})+\det (B_{m-1,k})+\cdots +\det
(B_{m-(k-1),k}).
\end{eqnarray*}%
From the hypothesis and the definition of $k$SO$k$ we obtain%
\begin{equation*}
\det (Q_{m+1,k})=\lambda a_{k,m+1}^{\text{ }k}+a_{k,m}^{\text{
}k}+\cdots +\ a_{k,m-k+2}^{\text{ }k}=a_{k,m+2}^{\text{ }k}.
\end{equation*}%
Therefore, the result is true for all non-negative integers.
\end{proof}

\bigskip

\begin{thrm}
\bigskip\ Let $a_{k,n}^{i}$ be $i$-th sequences of $k$SO$k$ and for $k\geq 2$
and $2\leq i\leq k;$
\begin{equation*}
B_{k,n+1}^{i}=\left[
\begin{array}{ccccc}
1 & -1 & 0 & \cdots & 0 \\
1 &  &  &  &  \\
1 &  &  &  &  \\
\vdots &  &  &  &  \\
1 &  & B_{k,n} &  &  \\
0 &  &  &  &  \\
\vdots &  &  &  &  \\
0 &  &  &  &
\end{array}%
\right]
\end{equation*}%
where $B_{k,n}$ is as (8) and the number of 1's in the first column is $%
k-i+1,$ then%
\begin{equation*}
\det (B_{k,n}^{i})=a_{k,n}^{\text{ }i}.
\end{equation*}
\end{thrm}

\bigskip

\begin{proof}
\bigskip Proof is similar to the proof of Theorem (1.9).
\end{proof}

\begin{cor}
\bigskip \bigskip\ If we rewrite Theorem (1.7) and Theorem (1.9) for $%
\lambda =1,$ we obtain%
\begin{equation*}
\det (Q_{k,n})=f_{k,n+1}^{k\text{ }}
\end{equation*}%
and%
\begin{equation*}
\det (B_{k,n})=f_{k,n+1}^{k\text{ }}
\end{equation*}%
respectively.
\end{cor}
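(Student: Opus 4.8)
The plan is to treat this corollary as a direct specialization of Theorems (1.7) and (1.9) to the case $\lambda = 1$, combined with the reduction of the general $k$SO$k$ numbers to the $k$SO$k$F numbers that was recorded in the introduction. Since both earlier theorems already express the relevant determinants in terms of the $k$-th sequence $a_{k,n+1}^{k}$, the entire task reduces to identifying $a_{k,n+1}^{k}$ with $f_{k,n+1}^{k}$ when $\lambda = 1$.

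First I would recall the observation following equation (5): substituting $\lambda = 1$ into the defining recurrence (5) turns it into the $k$SO$k$F recurrence (2) with all $c_{j} = 1$, and the initial conditions of the two families coincide term by term. Hence $a_{k,n}^{i} = f_{k,n}^{i}$ for every $n$ and every $1 \le i \le k$; in particular the $k$-th sequences agree, so $a_{k,n}^{k} = f_{k,n}^{k}$.

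Next I would apply Theorem (1.7), which asserts $\det(Q_{k,n}) = a_{k,n+1}^{k}$. The parameter $\lambda$ appears in $Q_{k,n}$ only along the main diagonal, so the value $\lambda = 1$ yields a genuine Hessenberg matrix and the determinant identity persists under the substitution. Combining it with the identity $a_{k,n+1}^{k} = f_{k,n+1}^{k}$ from the previous step gives $\det(Q_{k,n}) = f_{k,n+1}^{k}$. The same substitution applied to Theorem (1.9), whose matrix $B_{k,n}$ likewise carries $\lambda$ only on the diagonal, gives $\det(B_{k,n}) = f_{k,n+1}^{k}$.

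Because the statement is a pure specialization rather than a new result, I expect no genuine obstacle. The only point that deserves a moment's care is verifying that the inductive proofs of Theorems (1.7) and (1.9) never implicitly assume $\lambda \neq 1$; inspecting those proofs confirms that the induction step (which uses Theorem (1.6) and the recurrence) remains valid for every positive integer $\lambda$, so setting $\lambda = 1$ is entirely legitimate and the corollary follows immediately.
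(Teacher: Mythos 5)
Your argument is correct, but it takes a different route from the paper's. You derive the corollary internally: you set $\lambda=1$ in Theorems (1.7) and (1.9) to get $\det(Q_{k,n})=a_{k,n+1}^{k}$ and $\det(B_{k,n})=a_{k,n+1}^{k}$, and then invoke the observation made after equation (5) that $a_{k,n}^{\text{ }i}=f_{k,n}^{\text{ }i}$ when $\lambda=1$, so the $k$-th sequences coincide and the conclusion follows. The paper instead proves the corollary by appealing to the external reference [8], which states that for $\lambda=1$ these determinants equal Miles' numbers $f_{k,k+n-1}$, and then converts via the index identity $f_{k,n}^{k}=f_{k,k+n-2}$ from equation (3) to obtain $f_{k,k+n-1}=f_{k,n+1}^{k}$. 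Your approach is the more self-contained of the two and is arguably the more natural reading of ``rewrite Theorem (1.7) and Theorem (1.9) for $\lambda=1$,'' since it uses only results already established in the paper; the paper's approach instead buys a consistency check, confirming that the new determinantal representations specialize to the previously known ones of [8]. Your added remark that the inductive proofs of Theorems (1.7) and (1.9) impose no restriction on $\lambda$ is a sensible precaution and is indeed satisfied.
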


\bigskip

\begin{proof}
We know from [8] that for $\lambda =1$, $\det (Q_{k,n})=f_{k,k+n-1}^{\text{ }%
}$, $\det (B_{k,n})=f_{k,k+n-1}^{\text{ }}$ and since $%
f_{k,n}^{k}=f_{k,k+n-2}$ in (1.3) then,%
\begin{equation*}
\det (Q_{k,n})=f_{k,n+1}^{k\text{ }}\text{ and }\det (B_{k,n})=f_{k,n+1}^{k%
\text{ }}.
\end{equation*}
\end{proof}

\begin{cor}
\bigskip \bigskip If we rewrite Theorem (1.7) and Theorem (1.9) for $\lambda
=2,$ we obtain%
\begin{equation*}
\det (Q_{k,n})=p_{k,n+1}^{k\text{ }}
\end{equation*}%
and%
\begin{equation*}
\det (B_{k,n})=p_{k,n+1}^{k\text{ }}
\end{equation*}%
respectively.
\end{cor}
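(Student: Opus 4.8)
The plan is to recognize Corollary (1.13) as a direct specialization of Theorem (1.7) and Theorem (1.9) to the case $\lambda = 2$, combined with the identification of the $k$SO$k$ numbers at $\lambda = 2$ with the $k$SO$k$P (Pell) numbers that was established in the introduction.

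First I would recall from the discussion following equation (5) that substituting $\lambda = 2$ into the defining recurrence (5) reproduces exactly the recurrence (4) defining $k$SO$k$P, and that the boundary conditions for $1-k \leq n \leq 0$ coincide as well (both equal the indicator that is $1$ when $i = 1-n$ and $0$ otherwise). Hence the two families of sequences are term-by-term identical, giving $a_{k,n}^{\,i} = p_{k,n}^{\,i}$ for all $1 \leq i \leq k$ and all admissible $n$. In particular, taking $i = k$ yields $a_{k,n}^{\,k} = p_{k,n}^{\,k}$.

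Next I would apply Theorem (1.7) verbatim with $\lambda = 2$: the Hessenberg matrix $Q_{k,n}$ of (7) then has all diagonal entries equal to $2$, and the theorem gives $\det(Q_{k,n}) = a_{k,n+1}^{\,k}$. Substituting the identity from the previous step converts this into $\det(Q_{k,n}) = p_{k,n+1}^{\,k}$. Running the identical argument through Theorem (1.9) with $\lambda = 2$ yields $\det(B_{k,n}) = a_{k,n+1}^{\,k} = p_{k,n+1}^{\,k}$, which completes both claims.

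I expect no genuine obstacle here, since all the computational content already resides in the more general Theorems (1.7) and (1.9); this corollary merely sets the parameter $\lambda = 2$ and relabels. The only point deserving a line of care is confirming that the substitution $\lambda = 2$ leaves the initial data of (5) and (4) in agreement, which is immediate by inspection. Note also that, in contrast with the $\lambda = 1$ case of Corollary (1.12), no appeal to an external reference and no index shift such as $f_{k,n}^{\,k} = f_{k,k+n-2}$ is required, because the $\lambda = 2$ sequences are already written in the native $p_{k,n}^{\,k}$ notation.
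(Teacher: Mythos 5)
Your proposal is correct and matches the paper's intent: the paper's own proof simply says the argument is that of Theorem (1.7) specialized to $\lambda=2$, and your version---invoking the already-proven general-$\lambda$ theorems and the identification $a_{k,n}^{\,i}=p_{k,n}^{\,i}$ from the introduction---is the same specialization, stated slightly more cleanly. No gap.
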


\bigskip

\begin{proof}
\bigskip Proof is similar to the proof of \ Theorem (1.7) for $\lambda =2.$
\end{proof}

\bigskip

\begin{cor}
\bigskip If we rewrite Theorem (1.8) for $\lambda =1$ and $\lambda =2,$ we
obtain%
\begin{equation*}
\det (Q_{k,n}^{i})=f_{k,n}^{\text{ }i}
\end{equation*}%
and%
\begin{equation*}
\det (Q_{k,n}^{i})=p_{k,n}^{\text{ }i}
\end{equation*}%
respectively.
\end{cor}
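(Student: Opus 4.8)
The plan is to observe that Corollary 1.14 is a pure specialization of Theorem 1.8, obtained by fixing the value of the parameter $\lambda$, so the work reduces to tracking how the two objects appearing in Theorem 1.8 — the sequence $a_{k,n}^{i}$ and the matrix $Q_{k,n}^{i}$ — depend on $\lambda$. First I would recall the identifications established in the introduction: substituting $\lambda=1$ in the defining recurrence (5) yields $a_{k,n}^{i}=f_{k,n}^{i}$ for all $1\leq i\leq k$, while substituting $\lambda=2$ yields $a_{k,n}^{i}=p_{k,n}^{i}$. These hold because the boundary conditions in (5) are independent of $\lambda$ and coincide term-by-term with those of $k$SO$k$F and $k$SO$k$P, and the recurrences match once $\lambda$ is set to $1$ or $2$.

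Next I would note that the only place $\lambda$ enters the matrix $Q_{k,n}^{i}$ (equivalently the block $Q_{k,n}$ of (7)) is along the main diagonal, where every entry equals $\lambda$; the off-diagonal entries are the fixed powers $i^{|s-t|}$ and do not involve $\lambda$. Hence setting $\lambda=1$ turns every diagonal entry into $1$, and setting $\lambda=2$ turns every diagonal entry into $2$, while leaving the Hessenberg structure and all other entries unchanged. The matrix $Q_{k,n}^{i}$ in Corollary 1.14 is therefore literally the matrix of Theorem 1.8 evaluated at the chosen $\lambda$.

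With these two observations in hand, the conclusion is immediate: applying Theorem 1.8, which is valid for arbitrary $\lambda\in\mathbb{Z}^{+}$, gives $\det(Q_{k,n}^{i})=a_{k,n}^{i}$ for each value of $\lambda$. Specializing to $\lambda=1$ and invoking $a_{k,n}^{i}=f_{k,n}^{i}$ produces $\det(Q_{k,n}^{i})=f_{k,n}^{i}$, and specializing to $\lambda=2$ together with $a_{k,n}^{i}=p_{k,n}^{i}$ produces $\det(Q_{k,n}^{i})=p_{k,n}^{i}$, as claimed.

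There is no genuine obstacle here; the statement is a corollary precisely because its entire content is the substitution $\lambda\in\{1,2\}$ into a result already proved in full generality. The only point that warrants care — and the single thing I would verify explicitly rather than wave past — is the claim that $\lambda$ occurs in $Q_{k,n}^{i}$ exclusively on the diagonal, so that the two specialized matrices are exactly the $\lambda=1$ and $\lambda=2$ instances of the general matrix and no off-diagonal adjustment is hidden in the definition.
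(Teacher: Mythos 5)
Your proposal is correct and is exactly the intended argument: the paper gives no explicit proof for this corollary, and the analogous corollaries nearby are handled by precisely this specialization of the general-$\lambda$ theorem combined with the identifications $a_{k,n}^{i}=f_{k,n}^{i}$ (for $\lambda=1$) and $a_{k,n}^{i}=p_{k,n}^{i}$ (for $\lambda=2$) stated in the introduction. One tiny correction to your final verification point: in $Q_{k,n+1}^{i}$ the $(1,1)$ entry is fixed at $1$ rather than $\lambda$, so $\lambda$ appears only on the diagonal of the embedded block $Q_{k,n}$; this does not affect your argument, since you are simply applying Theorem (1.8) at the chosen value of $\lambda$.
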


\bigskip

\begin{cor}
\bigskip \bigskip If we rewrite Theorem (1.10) for $\lambda =1$ and $\lambda
=2,$ we obtain%
\begin{equation*}
\det (B_{k,n}^{i})=f_{k,n}^{\text{ }i}
\end{equation*}%
and%
\begin{equation*}
\det (B_{k,n}^{i})=p_{k,n}^{\text{ }i}
\end{equation*}%
respectively.
\end{cor}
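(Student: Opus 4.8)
The plan is to treat this as a direct specialization of Theorem (1.10), so almost all of the work has already been done and no new induction is required. Theorem (1.10) asserts that $\det(B_{k,n}^{i}) = a_{k,n}^{\text{ }i}$, where $a_{k,n}^{\text{ }i}$ is the $i$-th sequence of the general $k$SO$k$ numbers defined in (5) with parameter $\lambda$, and the matrix $B_{k,n}^{i}$ is built from the Hessenberg matrix $B_{k,n}$ of (8), whose only dependence on $\lambda$ sits on the main diagonal. The key observation, recorded already in the introduction, is that the defining recurrence (5) collapses onto the two classical families: substituting $\lambda=1$ gives $a_{k,n}^{\text{ }i}=f_{k,n}^{\text{ }i}$ (the $k$SO$k$F numbers), and substituting $\lambda=2$ gives $a_{k,n}^{\text{ }i}=p_{k,n}^{\text{ }i}$ (the $k$SO$k$P numbers), with the boundary conditions matching in all three cases.

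First I would set $\lambda=1$ throughout Theorem (1.10). On the matrix side this replaces every diagonal entry of $B_{k,n}$ (and of the leading $1$ in the first row of $B_{k,n}^{i}$) with the value $1$, producing the specific matrix whose determinant Theorem (1.10) evaluates. On the sequence side the right-hand side $a_{k,n}^{\text{ }i}$ becomes exactly $f_{k,n}^{\text{ }i}$ by the identity cited above. Combining these two specializations yields $\det(B_{k,n}^{i}) = f_{k,n}^{\text{ }i}$, which is the first claimed equality. I would then repeat the identical argument with $\lambda=2$: the diagonal entries become $2$, the sequence $a_{k,n}^{\text{ }i}$ becomes $p_{k,n}^{\text{ }i}$, and Theorem (1.10) gives $\det(B_{k,n}^{i}) = p_{k,n}^{\text{ }i}$, the second claimed equality.

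Since both statements follow by plugging fixed values of $\lambda$ into a theorem proved in full generality, there is no genuine obstacle here; the ``hard part,'' such as it is, amounts only to checking that the specialized matrix and the specialized recurrence agree under the substitution, which is immediate from the form of (8) and (5). In particular, no separate induction is needed, because the induction on $n$ was already carried out in the proof of Theorem (1.9), on which Theorem (1.10) rests. Thus the entire proof can be written as the single remark that the two displayed identities are the $\lambda=1$ and $\lambda=2$ instances of Theorem (1.10), together with the introductory relations $a_{k,n}^{\text{ }i}=f_{k,n}^{\text{ }i}$ and $a_{k,n}^{\text{ }i}=p_{k,n}^{\text{ }i}$.
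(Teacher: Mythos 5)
Your specialization argument is correct and is exactly what the paper intends: the corollary is stated without proof precisely because it is the $\lambda =1$ and $\lambda =2$ instances of Theorem (1.10) combined with the identities $a_{k,n}^{\text{ }i}=f_{k,n}^{\text{ }i}$ and $a_{k,n}^{\text{ }i}=p_{k,n}^{\text{ }i}$ noted in the introduction. One small slip in your description of the matrices: the $(1,1)$ entry of $B_{k,n}^{i}$ is fixed at $1$ by definition and does not become $2$ when $\lambda =2$; only the diagonal of the embedded block $B_{k,n}$ carries $\lambda$, but this does not affect the validity of the specialization.
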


\bigskip

\subsection{The permanent representations}

Let $A=(a_{i,j})$ be a square matrix of order $n$ over a ring R. The
permanent of $A$ is defined by%
\begin{equation*}
\text{per}(A)=\sum\limits_{\sigma \in
S_{n}}\prod\limits_{i=1}^{n}a_{i,\sigma (i)}
\end{equation*}%
where $S_{n}$ denotes the symmetric group on $n$ letters.

Let $A_{i,j}$ be the matrix obtained from a square matrix
$A=(a_{i,j})$ by deleting the $i$-th row and the $j$-th column. Then
it is also easy to see
that%
\begin{equation*}
\text{per}(A)=\sum\limits_{k=1}^{n}a_{i,k}\text{per}(A_{i,k})=\sum%
\limits_{k=1}^{n}a_{k,j}\text{per}(A_{k,j})
\end{equation*}
for any $i,j.$

\bigskip

\begin{thrm}
$\left[ 8\right] $Let $A_{n}$ be $n\times n$ lower Hessenberg matrix
for all
$n\geq 1$ and define per$(A_{0})=1.$ Then,%
\begin{equation*}
\text{per}(A_{1})=a_{11}
\end{equation*}%
and for $n\geq 2$%
\begin{equation}
\text{per}(A_{n})=a_{n,n}\text{per}(A_{n-1})+\sum\limits_{r=1}^{n-1}(a_{n,r}%
\prod\limits_{j=r}^{n-1}a_{j,j+1}\text{per}(A_{r-1})).
\end{equation}
\end{thrm}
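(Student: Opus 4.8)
The plan is to prove the recurrence directly from the cofactor (Laplace) expansion of the permanent along the last row, which the excerpt already records: $\text{per}(A_n)=\sum_{r=1}^{n}a_{n,r}\,\text{per}(A_{n,r})$, where $A_{n,r}$ denotes $A_n$ with its $n$-th row and $r$-th column deleted. The base case $n=1$ is immediate from $\text{per}(A_1)=a_{11}$, so for $n\ge 2$ the whole content is to evaluate each minor $A_{n,r}$ for $1\le r\le n$. The key structural observation is that, because $A_n$ is lower Hessenberg (so $a_{ij}=0$ whenever $j-i>1$), every such minor splits into a block lower-triangular form whose two diagonal blocks are easy to handle.

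Concretely, I would fix $r$ and examine $A_{n,r}$, which uses rows $1,\dots,n-1$ and columns $1,\dots,r-1,r+1,\dots,n$. First I would check that the entry in any row $i\le r-1$ and any column $j\ge r+1$ vanishes: there $j-i\ge (r+1)-(r-1)=2>1$, so the Hessenberg condition forces $a_{ij}=0$. Hence the rows $1,\dots,r-1$ meet only the columns $1,\dots,r-1$, and that top-left block is exactly the leading principal submatrix $A_{r-1}$. This exhibits $A_{n,r}$ as $\left[\begin{smallmatrix}A_{r-1}&0\\ C&M\end{smallmatrix}\right]$, where $M$ is the block on rows $r,\dots,n-1$ and columns $r+1,\dots,n$. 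Next I would identify $M$ as lower triangular: its diagonal carries the superdiagonal entries $a_{r,r+1},\dots,a_{n-1,n}$, while every entry strictly above that diagonal again violates the Hessenberg bound and is zero. Since the permanent of a triangular matrix is the product of its diagonal (only the identity permutation avoids the zeros above), $\text{per}(M)=\prod_{j=r}^{n-1}a_{j,j+1}$, the empty product at $r=n$ being $1$.

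The one genuine lemma I need is the multiplicativity of the permanent over a block lower-triangular matrix, $\text{per}\!\left[\begin{smallmatrix}A&0\\ C&B\end{smallmatrix}\right]=\text{per}(A)\,\text{per}(B)$, and I expect this to be the main obstacle since it is the only step not already granted by the excerpt. I would prove it straight from the definition $\text{per}=\sum_{\sigma}\prod_i a_{i,\sigma(i)}$: because the block above $B$ is zero, any permutation $\sigma$ contributing a nonzero term must send the rows of $A$ into the columns of $A$ (otherwise some factor lands in the zero block), so $\sigma$ restricts to a pair of permutations of the two index sets and the sum factors as the product of the two permanent sums. Applying this to the pieces above, together with the convention $\text{per}(A_0)=1$ which handles the $r=1$ case, gives $\text{per}(A_{n,r})=\text{per}(A_{r-1})\prod_{j=r}^{n-1}a_{j,j+1}$.

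Substituting back into the Laplace expansion yields $\text{per}(A_n)=\sum_{r=1}^{n}a_{n,r}\big(\prod_{j=r}^{n-1}a_{j,j+1}\big)\text{per}(A_{r-1})$; peeling off the $r=n$ term, whose product is empty and whose minor permanent is $\text{per}(A_{n-1})$, leaves $a_{n,n}\,\text{per}(A_{n-1})$ plus the stated sum over $1\le r\le n-1$. I would emphasize that, unlike the determinant recurrence (6), no sign $(-1)^{n-r}$ appears, precisely because the permanent carries no signs in either the Laplace expansion or the block factorization; this is the only place at which the present argument diverges from the proof of Theorem (1.6). An entirely parallel alternative would be to mirror that determinant proof term by term with every signature factor deleted, but I would keep the direct block argument above as the cleaner route.
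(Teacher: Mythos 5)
Your proof is correct, but note that the paper itself offers no proof of this statement at all: it is quoted as a known result from reference [8] (and its determinant analogue, Theorem (1.6), is likewise quoted from [2]). So your argument is not a variant of the paper's proof but a self-contained replacement for the citation, and it holds up. The Laplace expansion $\mathrm{per}(A_n)=\sum_{r=1}^{n}a_{n,r}\,\mathrm{per}(A_{n,r})$ is already recorded in the paper, so the only real content is your evaluation of the minors, and each step checks out: for rows $i\le r-1$ and columns $j\ge r+1$ one has $j-i\ge 2$, so the Hessenberg condition kills the top-right block and exposes $A_{r-1}$ in the top-left; the bottom-right block $M$ on rows $r,\dots,n-1$ and columns $r+1,\dots,n$ has $(s,t)$ entry $a_{r-1+s,\,r+t}$, which vanishes for $t>s$ since then $(r+t)-(r-1+s)\ge 2$, so $M$ is lower triangular with diagonal $a_{r,r+1},\dots,a_{n-1,n}$ and $\mathrm{per}(M)=\prod_{j=r}^{n-1}a_{j,j+1}$; and the block factorization $\mathrm{per}$ of a block lower-triangular matrix equals the product of the permanents of the diagonal blocks follows exactly as you say, since a permutation meeting the zero block contributes nothing and hence must preserve the two index sets. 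The edge cases $r=n$ (empty product, minor equal to $A_{n-1}$) and $r=1$ (convention $\mathrm{per}(A_0)=1$) are handled correctly, and your observation that the argument is the determinant recurrence with all signature factors deleted is exactly the right way to see why no $(-1)^{n-r}$ appears in (9). What your route buys is a complete, elementary verification inside the paper; what the paper's route buys is brevity by outsourcing the lemma to [8].
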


\bigskip

\begin{thrm}
\bigskip \bigskip Let $a_{k,n}^{k}$ be the $k$-th sequences of $k$SO$k,$ $%
k\geq 2$ be an integer and $H_{k,n}=(h_{st})$ be an $n\times n$
Hessenberg
matrix$,$ such that%
\begin{equation*}
h_{st}=\left\{
\begin{array}{l}
i^{s-t}\text{ \ \ \ \ \ \ \ \ if \ }-1\leq s-t<k\text{ and }s\neq
t,\text{\
\ \ \ \ \ } \\
\lambda \text{ \ \ \ \ \ \ \ \ \ \ \ if\ \ \ \ \ \ \ \ \ \ \
}s=t\text{,\ \
\ \ \ \ \ \ \ \ \ \ \ \ \ \ \ \ \ \ \ \ \ \ \ \ \ } \\
0\text{ \ \ \ \ \ \ \ \ \ \ \ otherwise\ \ \ \ \ \ \ \ \ \ \ \ \ \ \
\ \ \ \
\ \ \ \ \ \ \ \ \ \ \ \ \ \ \ }%
\end{array}%
\right.
\end{equation*}%
i.e.,%
\begin{equation}
H_{k,n}=\left[
\begin{array}{cccccc}
\lambda & -i & 0 & 0 & \cdots & 0 \\
i & \lambda & -i & 0 & \cdots & 0 \\
i^{2} & i & \lambda & -i & \cdots & 0 \\
\vdots & \vdots & \vdots & \vdots &  & \vdots \\
i^{k-1} & i^{k-2} & i^{k-3} & i^{k-4} & \cdots & 0 \\
0 & i^{k-1} & i^{k-2} & i^{k-3} & \cdots & 0 \\
& \vdots & \vdots & \vdots & \ddots & -i \\
0 & 0 & 0 & \cdots & i & \lambda%
\end{array}%
\right]
\end{equation}%
then%
\begin{equation*}
\text{per}(H_{k,n})=a_{k,n+1}^{\text{ }k}
\end{equation*}%
where $i=\sqrt{-1}.$
\end{thrm}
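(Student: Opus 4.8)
The plan is to mirror the inductive argument used for Theorem (1.7), but with the Hessenberg permanent expansion of Theorem (1.15) in place of the determinant expansion. First I would dispose of the base case $n=1$: here $\text{per}(H_{k,1})=\lambda$, while the initial conditions in (5) give $a_{k,1}^{k}=1$ and hence $a_{k,2}^{k}=\lambda a_{k,1}^{k}=\lambda$, so the identity $\text{per}(H_{k,n})=a_{k,n+1}^{k}$ holds at $n=1$.

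For the inductive step I would assume $\text{per}(H_{k,j})=a_{k,j+1}^{k}$ for all $j\le m$ and expand $\text{per}(H_{k,m+1})$ by Theorem (1.15):
\begin{equation*}
\text{per}(H_{k,m+1})=h_{m+1,m+1}\,\text{per}(H_{k,m})+\sum_{r=1}^{m}\Big(h_{m+1,r}\prod_{j=r}^{m}h_{j,j+1}\,\text{per}(H_{k,r-1})\Big).
\end{equation*}
Here $h_{m+1,m+1}=\lambda$, and each superdiagonal factor is $h_{j,j+1}=i^{\,j-(j+1)}=i^{-1}=-i$, so $\prod_{j=r}^{m}h_{j,j+1}=(-i)^{m+1-r}$. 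The entry $h_{m+1,r}$ equals $i^{\,m+1-r}$ when $1\le m+1-r<k$ and vanishes once $m+1-r\ge k$, so only the indices $m-k+2\le r\le m$ survive in the sum.

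The decisive step is the coefficient cancellation. For each surviving $r$,
\begin{equation*}
h_{m+1,r}\prod_{j=r}^{m}h_{j,j+1}=i^{\,m+1-r}(-i)^{m+1-r}=\big(i(-i)\big)^{m+1-r}=(-i^{2})^{m+1-r}=1,
\end{equation*}
so every retained term collapses to $\text{per}(H_{k,r-1})$. This is precisely where the present matrix must differ from $Q_{k,n}$ of Theorem (1.7): the permanent expansion of Theorem (1.15) lacks the alternating factor $(-1)^{m+1-r}$ that appears in the determinant expansion, and placing $-i$ (rather than $i$) on the superdiagonal is exactly what replaces that missing sign, again yielding the product $1$. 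Checking this cancellation together with the correct range of nonvanishing entries is the main obstacle; everything else is bookkeeping.

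Finally I would invoke the induction hypothesis and the recurrence (5). The preceding reduces the expansion to
\begin{equation*}
\text{per}(H_{k,m+1})=\lambda\,\text{per}(H_{k,m})+\text{per}(H_{k,m-1})+\cdots+\text{per}(H_{k,m-k+1}),
\end{equation*}
and substituting $\text{per}(H_{k,j})=a_{k,j+1}^{k}$ gives $\lambda a_{k,m+1}^{k}+a_{k,m}^{k}+\cdots+a_{k,m-k+2}^{k}$, which by the defining recurrence (5) for the $k$-th sequence equals $a_{k,m+2}^{k}$. Hence $\text{per}(H_{k,m+1})=a_{k,m+2}^{k}$, and the induction is complete.
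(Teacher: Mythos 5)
Your proof is correct and follows exactly the route the paper intends: the paper's own proof is just the remark that one repeats the induction of Theorem (1.7) with the permanent expansion of Theorem (1.15), and your write-up carries out precisely that induction, including the key observation that $i^{m+1-r}(-i)^{m+1-r}=1$ compensates for the absence of the sign $(-1)^{m+1-r}$ in the permanent expansion. No gaps.
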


\bigskip

\begin{proof}
\bigskip Proof is similar to the proof of \ Theorem (1.7) using Theorem
(1.15)
\end{proof}

\bigskip

\begin{thrm}
\bigskip \bigskip \bigskip Let $a_{k,n}^{k}$ be the $k$-th sequences of $k$SO%
$k$P, $k$ $\geq 2$ be an integer and let $D_{k,n}=(d_{st})$ be an
$n\times n$
Hessenberg matrix such that%
\begin{equation*}
d_{st}=\left\{
\begin{array}{l}
1\text{\ \ \ \ \ \ \ \ \ \ \ \ \ \ if \ }-1\leq s-t<k\text{ and }s\neq t,%
\text{\ \ \ \ } \\
\lambda \text{ \ \ \ \ \ \ \ \ \ \ \ \ \ if\ \ \ \ \ \ \ \ \ \ \ }s=t\text{%
,\ \ \ \ \ \ \ \ \ \ \ \ \ \ \ \ \ \ \ \ \ \ \ \ \ \ \ } \\
0\text{ \ \ \ \ \ \ \ \ \ \ \ \ \ otherwise\ \ \ \ \ \ \ \ \ \ \ \ \
\ \ \ \
\ \ \ \ \ \ \ \ \ \ \ \ \ \ \ \ \ }%
\end{array}%
\right.
\end{equation*}%
i.e.,%
\begin{equation}
D_{k,n}=\left[
\begin{array}{cccccc}
\lambda & 1 & 0 & 0 & \cdots & 0 \\
1 & \lambda & 1 & 0 & \cdots & 0 \\
1 & 1 & \lambda & 1 & \cdots & 0 \\
\vdots & \ddots & \ddots & \ddots &  & \vdots \\
1 & 1 & 1 & 1 & \cdots & 0 \\
0 & 1 & 1 & 1 & \cdots & 0 \\
& \ddots & \ddots & \ddots & \ddots &  \\
0 & 0 & 0 & 1 & \cdots & \lambda%
\end{array}%
\right] .
\end{equation}%
where number of 1's in the first column is $k-i+1,$ then%
\begin{equation*}
\text{per}(D_{k,n})=a_{k,n+1}^{\text{ }k}.
\end{equation*}
\end{thrm}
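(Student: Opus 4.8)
The plan is to argue by induction on $n$, exactly parallel to the proofs of Theorem (1.7) and Theorem (1.16), but now invoking the permanent expansion for lower Hessenberg matrices (Theorem (1.15)) in place of the determinant expansion. The base case $n=1$ is immediate: $D_{k,1}=[\lambda]$, so $\text{per}(D_{k,1})=\lambda$, and one checks directly from (5) together with the initial conditions (the only nonzero starting value of the $k$-th sequence being $a_{k,1-k}^{k}=1$, whence $a_{k,1}^{k}=1$) that $a_{k,2}^{k}=\lambda$. So $\text{per}(D_{k,1})=a_{k,2}^{k}$.

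For the inductive step I would assume $\text{per}(D_{k,m})=a_{k,m+1}^{k}$ for all admissible indices up to $m$ and expand $\text{per}(D_{k,m+1})$ via Theorem (1.15). The key simplification is that $D_{k,n}$ carries $d_{j,j+1}=1$ on the superdiagonal and $d_{m+1,r}=1$ precisely for $r\in\{m-k+2,\dots,m\}$ (and $0$ otherwise), while $d_{m+1,m+1}=\lambda$. Hence every product $\prod_{j=r}^{m}d_{j,j+1}$ equals $1$, and the expansion collapses to
\[
\text{per}(D_{k,m+1})=\lambda\,\text{per}(D_{k,m})+\sum_{r=m-k+2}^{m}\text{per}(D_{k,r-1}).
\]
Re-indexing the sum as $\text{per}(D_{k,m-1})+\cdots+\text{per}(D_{k,m-k+1})$ and applying the inductive hypothesis turns the right-hand side into $\lambda a_{k,m+1}^{k}+a_{k,m}^{k}+\cdots+a_{k,m-k+2}^{k}$, which by the defining recurrence (5) (with $n=m+2$) is exactly $a_{k,m+2}^{k}$. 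This closes the induction.

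There is essentially no serious obstacle here, since the structure of $D_{k,n}$ (a $1$ wherever $H_{k,n}$ had a power of $i$) was engineered so that the cancellation used for $H_{k,n}$ becomes trivial: in Theorem (1.16) the factor $i^{s-t}$ from $h_{m+1,r}$ cancelled against $(-i)^{m+1-r}$ coming from the superdiagonal product to leave $1$, whereas here both factors are already $1$. The only points demanding care are the index bookkeeping (the lower summation limit $m-k+2$, which comes from the band condition $-1\le s-t<k$) and the treatment of the boundary conventions $\text{per}(D_{k,0})=1$ and the small-index terms $a_{k,n}^{k}$, all of which are absorbed into the base case and the initial conditions. I would also remark that, because the diagonal carries the general parameter $\lambda$, the same computation in fact yields $\text{per}(D_{k,n})=a_{k,n+1}^{k}$ for an arbitrary $k$SO$k$ sequence, the Pell case being the specialization $\lambda=2$.
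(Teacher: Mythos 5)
Your proof is correct and is essentially the paper's own argument: the paper simply states that the proof is ``similar to the proof of Theorem (1.9) using Theorem (1.15)'', i.e.\ precisely the induction on $n$ via the Hessenberg permanent expansion that you carry out, with the band structure forcing the nonzero entries of the last row into columns $m-k+2,\dots,m$ and the superdiagonal products collapsing to $1$. Your closing remark that the argument works for general $\lambda$ (so the hypothesis ``$k$SO$k$P'' in the statement is really the general $k$SO$k$ case) is also consistent with how the paper uses this theorem in its corollaries.
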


\begin{proof}
\bigskip Proof of the theorem is similar to the proof of \ Theorem (1.9)
using Theorem (1.15)
\end{proof}

\bigskip

\begin{cor}
If we rewrite Theorem (1.16) and Theorem (1.17) for $\lambda =1,$ we obtain%
\begin{equation*}
\text{per}(H_{k,n})=f_{k,n+1}^{\text{ }k}
\end{equation*}%
and%
\begin{equation*}
\text{per}(D_{k,n})=f_{k,n+1}^{\text{ }k}
\end{equation*}%
respectively.
\end{cor}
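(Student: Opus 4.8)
The plan is to obtain this corollary directly from Theorems 1.16 and 1.17 by specializing the parameter $\lambda$, with no fresh induction. The key observation, already recorded in the introduction, is that the recurrence (5) defining the $k$SO$k$ numbers collapses to the recurrence (2) defining the $k$SO$k$F numbers (all $c_{j}=1$) exactly when $\lambda=1$, and the two families carry identical boundary conditions. Hence $a_{k,n}^{i}=f_{k,n}^{i}$ for every $i$ and every $n$ as soon as $\lambda=1$; in particular $a_{k,n+1}^{k}=f_{k,n+1}^{k}$.

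First I would check that the matrices $H_{k,n}$ and $D_{k,n}$ carry $\lambda$ only on their main diagonals, the off-diagonal entries being powers of $i$ for $H_{k,n}$ and all equal to $1$ for $D_{k,n}$, independent of $\lambda$. Consequently, setting $\lambda=1$ produces well-defined Hessenberg matrices of the same size, and the permanent expansion of Theorem 1.15 applies to them verbatim. Reading Theorem 1.16 at $\lambda=1$ then gives $\text{per}(H_{k,n})=a_{k,n+1}^{k}$, and combining this with $a_{k,n+1}^{k}=f_{k,n+1}^{k}$ yields $\text{per}(H_{k,n})=f_{k,n+1}^{k}$. The identical substitution applied to Theorem 1.17 gives $\text{per}(D_{k,n})=a_{k,n+1}^{k}=f_{k,n+1}^{k}$.

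There is essentially no obstacle here: the whole inductive content sits inside the parent theorems, and the only point deserving a moment's attention is confirming that the specialization $\lambda=1$ touches only the diagonal and leaves the rest of the Hessenberg structure unchanged. Once that is noted, the corollary is an immediate substitution.
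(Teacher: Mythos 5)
Your proof is correct, but it follows a different route from the paper's. You obtain the corollary internally: specialize $\lambda =1$ in Theorems (1.16) and (1.17) to get $\text{per}(H_{k,n})=a_{k,n+1}^{\text{ }k}$ and $\text{per}(D_{k,n})=a_{k,n+1}^{\text{ }k}$, and then invoke the identification $a_{k,n}^{\text{ }i}=f_{k,n}^{\text{ }i}$ at $\lambda =1$ that is recorded in the introduction (the recurrence (5) with $\lambda=1$ coincides with (2) with all $c_{j}=1$, and the initial conditions agree). The paper instead does not touch Theorems (1.16) and (1.17) in its proof at all: it quotes from the references [7] and [8] the previously known facts that for $\lambda =1$ one has $\text{per}(H_{k,n})=f_{k,k+n-1}$ and $\text{per}(D_{k,n})=f_{k,k+n-1}$ in terms of Miles' generalized order-$k$ Fibonacci numbers, and then converts via the index relation $f_{k,n}^{k}=f_{k,k+n-2}$ of equation (3), so that $f_{k,k+n-1}=f_{k,n+1}^{k}$. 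Your argument is the more self-contained one and matches the literal reading of ``rewrite Theorem (1.16) and Theorem (1.17) for $\lambda=1$''; the paper's argument has the additional value of cross-checking the new theorems against results already established in the literature. Both are valid, and your observation that $\lambda$ enters $H_{k,n}$ and $D_{k,n}$ only on the main diagonal, so that the specialization is harmless, is a reasonable point to record even though neither proof strictly needs it.
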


\bigskip

\begin{proof}
We know from [8] and [7] for $\lambda =1,$
per$(H_{k,n})=f_{k,k+n-1}^{\text{
}}$ and per$(D_{k,n})=f_{k,k+n-1}^{\text{ }}$ respectively and since $%
f_{k,n}^{k}=f_{k,k+n-2}$ in $(1.3)$ then,%
\begin{equation*}
\text{per}(H_{k,n})=f_{k,n+1}^{\text{ }k}\text{ \ and per}%
(D_{k,n})=f_{k,n+1}^{\text{ }k}.
\end{equation*}
\end{proof}

\bigskip

\begin{cor}
\bigskip If we rewrite Theorem (1.16) and Theorem (1.17) for $\lambda =2,$
we obtain
\begin{equation*}
\text{per}(H_{k,n})=p_{k,n+1}^{\text{ }k}
\end{equation*}%
and%
\begin{equation*}
\text{per}(D_{k,n})=p_{k,n+1}^{\text{ }k}
\end{equation*}%
respectively.
\end{cor}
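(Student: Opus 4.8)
The plan is to deduce this corollary directly from Theorem (1.16) and Theorem (1.17) by specializing the parameter $\lambda$, rather than by running any fresh induction. The key observation, already recorded in the introduction, is that the $k$SO$k$ recurrence (5) reduces to the $k$SO$k$P recurrence (4) precisely when $\lambda = 2$; that is, substituting $\lambda = 2$ in (5) yields $a_{k,n}^{i} = p_{k,n}^{i}$ for every $1 \leq i \leq k$ and every admissible $n$, since the two recurrences then share identical coefficients and identical initial conditions.

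First I would note that the matrices $H_{k,n}$ and $D_{k,n}$ of Theorem (1.16) and Theorem (1.17) carry the parameter $\lambda$ only on their main diagonal, while their off-diagonal structure is independent of $\lambda$. Hence setting $\lambda = 2$ produces well-defined matrices whose permanents are still computed by those theorems, giving $\text{per}(H_{k,n}) = a_{k,n+1}^{k}$ and $\text{per}(D_{k,n}) = a_{k,n+1}^{k}$, where $a_{k,n}^{k}$ now denotes the $k$-th sequence generated with $\lambda = 2$.

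Then I would invoke the identity $a_{k,n}^{k} = p_{k,n}^{k}$, valid at $\lambda = 2$, to replace $a_{k,n+1}^{k}$ by $p_{k,n+1}^{k}$ in both equalities, thereby obtaining $\text{per}(H_{k,n}) = p_{k,n+1}^{k}$ and $\text{per}(D_{k,n}) = p_{k,n+1}^{k}$, as claimed. This is exactly parallel to the way Corollary (1.18) specializes the same two theorems at $\lambda = 1$ to recover the Fibonacci case.

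Since the argument is a clean substitution, I do not anticipate any genuine obstacle. The only point requiring a moment's care is confirming that the specialization $\lambda = 2$ is legitimate at the level of the matrix entries, so that the conclusions of Theorem (1.16) and Theorem (1.17) transfer verbatim. Alternatively, one could mirror the proof of Corollary (1.13) and re-run the Hessenberg permanent induction of Theorem (1.16) with $\lambda = 2$ fixed throughout, but this merely reproduces the general computation with a particular constant and offers no new content, so I would favor the substitution argument.
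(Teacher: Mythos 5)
Your proposal is correct and is exactly the argument the paper intends: Theorems (1.16) and (1.17) already hold for arbitrary $\lambda$, and the identification $a_{k,n}^{\text{ }i}=p_{k,n}^{\text{ }i}$ at $\lambda =2$ (recorded in the introduction) turns $\text{per}(H_{k,n})=a_{k,n+1}^{\text{ }k}$ and $\text{per}(D_{k,n})=a_{k,n+1}^{\text{ }k}$ into the stated identities. The paper gives no separate proof for this corollary (its analogues merely say ``similar for $\lambda=2$''), so your clean substitution, with the optional fallback of re-running the Hessenberg induction, is precisely what is expected.
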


\begin{thrm}
\bigskip \bigskip\ Let $a_{k,n}^{i}$ be the $i$-th sequences of $k$SO$k$ and
for $2\leq i\leq k$ and $n,k\geq 2;$
\begin{equation*}
H_{k,n+1}^{i}=\left[
\begin{array}{ccccc}
1 & -i & 0 & \cdots & 0 \\
i &  &  &  &  \\
i^{2} &  &  &  &  \\
\vdots &  &  &  &  \\
i^{k-i} &  & H_{k,n} &  &  \\
0 &  &  &  &  \\
\vdots &  &  &  &  \\
0 &  &  &  &
\end{array}%
\right]
\end{equation*}%
where $H_{k,n}$ is as (10), then%
\begin{equation*}
\text{per}(H_{k,n}^{i})=a_{k,n}^{\text{ }i}.
\end{equation*}
\end{thrm}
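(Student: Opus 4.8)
The plan is to prove the identity by induction on the order of the matrix, following the arguments of Theorem (1.8) and Theorem (1.16) but expanding permanents with the lower Hessenberg permanent recurrence of Theorem (1.15) instead of the determinant recurrence of Theorem (1.6). First I would check that $H_{k,n}^{i}$ is genuinely lower Hessenberg: the only entry lying strictly above the main diagonal is the $-i$ in position $(1,2)$, which sits on the superdiagonal, while its bottom-right block is the lower Hessenberg matrix $H_{k,n-1}$. The feature that makes the induction possible is that every leading principal submatrix of $H_{k,n}^{i}$ is again a bordered matrix of exactly the same shape, namely $H_{k,r}^{i}$, so that the recurrence of Theorem (1.15) expresses $\text{per}(H_{k,n}^{i})$ through the permanents of the smaller bordered matrices $H_{k,r-1}^{i}$.

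The induction step is the core of the argument. Expanding along the last row by Theorem (1.15) gives
\begin{equation*}
\text{per}(H_{k,n}^{i})=\lambda\,\text{per}(H_{k,n-1}^{i})+\sum_{r=1}^{n-1}\Big(h_{n,r}\prod_{j=r}^{n-1}h_{j,j+1}\,\text{per}(H_{k,r-1}^{i})\Big).
\end{equation*}
Because $h_{n,r}=0$ for $r\leq n-k$, only the range $n-k+1\leq r\leq n-1$ contributes. For these terms every superdiagonal factor equals $h_{j,j+1}=-i$ and $h_{n,r}=i^{\,n-r}$, so the decisive product collapses:
\begin{equation*}
h_{n,r}\prod_{j=r}^{n-1}h_{j,j+1}=i^{\,n-r}(-i)^{\,n-r}=(i\cdot(-i))^{\,n-r}=1 .
\end{equation*}
This cancellation of the imaginary units is precisely the mechanism that drives Theorem (1.7) and Theorem (1.16); the superdiagonal entry $-i$ of $H_{k,n}^{i}$ is chosen exactly so that it cancels the $i$ produced by the subdiagonal powers. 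Hence all surviving coefficients are $1$, and the expansion reduces to
\begin{equation*}
\text{per}(H_{k,n}^{i})=\lambda\,\text{per}(H_{k,n-1}^{i})+\text{per}(H_{k,n-2}^{i})+\cdots+\text{per}(H_{k,n-k}^{i}).
\end{equation*}
Invoking the induction hypothesis and the defining recurrence (5) of $k$SO$k$, the right-hand side is $\lambda a_{k,n-1}^{\,i}+a_{k,n-2}^{\,i}+\cdots+a_{k,n-k}^{\,i}=a_{k,n}^{\,i}$, which closes the step.

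The step I expect to be the real obstacle is the base of the induction. The recurrence obtained above is the common recurrence (5) satisfied by every one of the sequences $a_{k,n}^{\,i}$; what singles out the $i$-th sequence, as opposed to the $k$-th sequence handled in Theorem (1.16), is entirely the initial data, and in the matrix this information is carried by the border, namely the top-left entry $1$ (in place of $\lambda$) together with the $k-i+1$ first-column entries $1,i,i^{2},\ldots,i^{k-i}$. In the transition range $k-i+1<n\leq k$ the last row of $H_{k,n}^{i}$ meets this border, where the border value $0$ disagrees with the value $i^{\,n-1}$ that the interior rule would assign in position $(n,1)$; consequently the clean recurrence above is no longer available, and these finitely many permanents must be evaluated directly and matched against the initial values prescribed by (5). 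Once these base cases are verified the induction propagates to all $n$. An alternative that sidesteps the delicate transition is to expand $\text{per}(H_{k,n}^{i})$ along its first column, identify the resulting minors with permanents of the unbordered matrices $H_{k,\cdot}$, and then sum using Theorem (1.16) and Theorem (1.2) to obtain $\sum_{m=1}^{k-i+1}a_{k,n-m+1}^{\,k}=a_{k,n}^{\,i}$.
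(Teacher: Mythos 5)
Your proposal is correct and follows essentially the same route as the paper, which proves this theorem only by reference to the inductions of Theorems (1.7) and (1.16): the expansion via Theorem (1.15), the cancellation $i^{\,n-r}(-i)^{\,n-r}=1$, and the reduction to the recurrence (5) are exactly the intended argument. Your explicit attention to the base cases where the last row meets the border (and the alternative first-column expansion via Theorem (1.2)) is, if anything, more careful than what the paper records.
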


\bigskip

\begin{proof}
\bigskip Proof is similar to the proof of Theorem (1.16).
\end{proof}

\begin{thrm}
\bigskip Let $a_{k,n}^{i}$ be $i$-th sequences of $k$SO$k$ numbers and for $%
2\leq i\leq k$ and $k\geq 2;$%
\begin{equation*}
D_{k,n+1}^{i}=\left[
\begin{array}{ccccc}
1 & 1 & 0 & \cdots  & 0 \\
1 &  &  &  &  \\
1 &  &  &  &  \\
\vdots  &  &  &  &  \\
1 &  & D_{k,n} &  &  \\
0 &  &  &  &  \\
\vdots  &  &  &  &  \\
0 &  &  &  &
\end{array}%
\right]
\end{equation*}%
where $D_{k,n}$ is as (11) and the numbers of $1$'s in the first
column is
$k-i+1,$ then%
\begin{equation*}
\text{per}(D_{k,n}^{i})=a_{k,n}^{i}.
\end{equation*}
\end{thrm}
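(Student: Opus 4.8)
The plan is to use that $D_{k,n+1}^{i}$ is the matrix $D_{k,n}$ of (11) bordered above by the row $(1,1,0,\dots,0)$ and on the left by a column whose first $k-i+1$ entries are $1$ and whose remaining entries are $0$. Because the only new data live in the first row and first column, I would not run a fresh induction but instead expand the permanent once and reduce the resulting pieces to already-known quantities: the permanent of the plain matrix $D_{k,n}$ (Theorem (1.17)) and the $i$-versus-$k$ relation (Theorem (1.2)).

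Concretely, I would expand $\text{per}(D_{k,n+1}^{i})$ along its first column, whose nonzero entries are the $k-i+1$ ones in rows $1,\dots,k-i+1$, giving
\[
\text{per}(D_{k,n+1}^{i})=\sum_{s=1}^{k-i+1}\text{per}(M_{s}),
\]
where $M_{s}$ deletes the first column and the $s$-th row. For $s=1$ the minor is exactly $D_{k,n}$, so $\text{per}(M_{1})=a_{k,n+1}^{\,k}$ by Theorem (1.17). For $2\le s\le k-i+1$ the crucial observation is that, once the first column is removed, every exposed top row of the remaining block carries a single nonzero entry, namely the $1$ on the superdiagonal of $D_{k,n}$. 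Expanding repeatedly along these rows therefore sets off a cascade that strips the leading rows and columns one at a time (each contributing a factor $1$) until it runs past the deleted row $s$; what survives is the lower-right principal block $D_{k,n-s+1}$. Hence $\text{per}(M_{s})=\text{per}(D_{k,n-s+1})=a_{k,n-s+2}^{\,k}$, again by Theorem (1.17), uniformly in $s$.

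Summing these contributions gives $\text{per}(D_{k,n+1}^{i})=\sum_{s=1}^{k-i+1}a_{k,n-s+2}^{\,k}$, which is precisely the telescoped sum of Theorem (1.2) and hence equals the required $i$-th sequence value $a_{k,n}^{\,i}$ (with the same $n$-versus-$(n+1)$ index convention already present in Theorem (1.17)). The step I expect to be the real work is the cascade in the second paragraph: one must check that deleting an interior row of the first column and then exhausting the forced single-entry rows genuinely leaves a clean $D_{k,\ell}$ block, and that its order drops by exactly one as $s$ increases, so that the index of each $k$-th sequence term comes out right. A convenient cross-check, which avoids the cascade entirely, is a downward induction on $i$: splitting the first column as the first column of $D_{k,n+1}^{i+1}$ plus the basis vector supported in row $k-i+1$ and using linearity of the permanent in that column gives $\text{per}(D_{k,n+1}^{i})=\text{per}(D_{k,n+1}^{i+1})+\text{per}(D^{\ast})$, after which one identifies $\text{per}(D^{\ast})$ with the single shifted term $a_{k,n-k+i}^{\,k}$ and lets Lemma (1.1) close the induction. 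Either route reduces the claim to Theorem (1.17) and Theorem (1.2), exactly as Theorems (1.8), (1.10) and (1.20) are reduced to their $k$-th sequence counterparts.
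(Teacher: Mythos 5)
Your proof is correct, but it is not the route the paper takes. The paper's proof of this theorem is (by the chain of ``similar to'' references) an induction on $n$ using the last-row expansion for permanents of lower Hessenberg matrices, Theorem (1.15), exactly as in the proofs of Theorems (1.9) and (1.17): one expands $\mathrm{per}(D_{k,m+1}^{i})$ along its last row, recognizes the smaller bordered matrices $D_{k,r}^{i}$ in the resulting terms, and matches the recurrence (5) for $a_{k,n}^{i}$. You instead perform a single expansion along the first column, and the cascade you describe does work: after deleting column $1$ and row $s$, the exposed top row at each stage is a row $t$ of $D_{k,n}$ restricted to columns $t+1,\dots,n$, whose only nonzero entry is the superdiagonal $1$, so the forced expansions strip rows and columns in lockstep and leave the trailing principal block of $D_{k,n}$ of order $n-s+1$; since the entries of $D_{k,n}$ depend only on the index difference, that block is literally $D_{k,n-s+1}$, giving $\mathrm{per}(M_{s})=a_{k,n-s+2}^{\,k}$ uniformly in $s$, and the sum $\sum_{s=1}^{k-i+1}a_{k,n-s+2}^{\,k}$ is exactly $a_{k,n+1}^{\,i}$ by Theorem (1.2), which is the asserted value for the $(n+1)\times(n+1)$ matrix. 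What your approach buys is that it needs no fresh induction and no base-case bookkeeping for small $n$: it reduces the bordered permanent directly to the already-proved unbordered case (Theorem (1.17)) plus the $i$-versus-$k$ relation, which is arguably closer to the paper's own stated motivation for Theorem (1.2). What the paper's approach buys is uniformity: the same induction template proves all four bordered theorems ((1.8), (1.10), (1.20), (1.21)) without relying on the Toeplitz structure that makes your trailing-block identification clean. Your final sentence writes the answer as $a_{k,n}^{\,i}$ where the computation actually lands on $a_{k,n+1}^{\,i}$; you flag the index convention, and the conclusion is the correct one, but that shift should be stated explicitly rather than parenthetically.
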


\bigskip

\begin{proof}
\bigskip \bigskip Proof is similar to the proof of Theorem (1.17).
\end{proof}

\bigskip

\begin{cor}
\bigskip If we rewrite Theorem (1.20) for $\lambda =1$ and $\lambda =2,$ we
obtain%
\begin{equation*}
\text{per}(H_{k,n}^{i})=f_{k,n}^{\text{ }i}
\end{equation*}%
and%
\begin{equation*}
\text{per}(H_{k,n}^{i})=p_{k,n}^{\text{ }i}
\end{equation*}%
respectively.
\end{cor}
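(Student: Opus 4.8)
The plan is to obtain this corollary as a direct specialization of the general $\lambda$-result in Theorem (1.20), so the work is entirely one of substitution rather than a fresh induction. First I would recall the two reduction identities established immediately after the definition (5) in the introduction: substituting $\lambda = 1$ into the recurrence (5) gives $a_{k,n}^{i} = f_{k,n}^{i}$, and substituting $\lambda = 2$ gives $a_{k,n}^{i} = p_{k,n}^{i}$, both valid for all $1 \le i \le k$. These state precisely that the general $k$SO$k$ numbers collapse onto the $k$SO$k$F numbers and the $k$SO$k$P numbers under the two chosen values of $\lambda$.

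Next I would record how the matrix $H_{k,n}^{i}$ depends on $\lambda$. Inspecting the definition of $H_{k,n}$ in (10) together with the border structure of $H_{k,n}^{i}$ in Theorem (1.20), the parameter $\lambda$ occurs \emph{only} on the main diagonal; every off-diagonal entry is a fixed power of $i=\sqrt{-1}$ (namely $i^{\,s-t}$ below the diagonal and $-i$ on the superdiagonal) and carries no dependence on $\lambda$. Consequently, setting $\lambda = 1$ or $\lambda = 2$ merely replaces the diagonal entries by $1$ or $2$ while preserving the lower-Hessenberg shape, so the permanent expansion of Theorem (1.15) applies verbatim to the specialized matrices.

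Finally, I would invoke Theorem (1.20), which asserts $\operatorname{per}(H_{k,n}^{i}) = a_{k,n}^{i}$ for arbitrary $\lambda \in \mathbb{Z}^{+}$. Taking $\lambda = 1$ and applying the first reduction identity yields
\begin{equation*}
\operatorname{per}(H_{k,n}^{i}) = a_{k,n}^{i} = f_{k,n}^{\text{ }i},
\end{equation*}
and taking $\lambda = 2$ together with the second identity yields
\begin{equation*}
\operatorname{per}(H_{k,n}^{i}) = a_{k,n}^{i} = p_{k,n}^{\text{ }i},
\end{equation*}
which are exactly the two claimed equalities.

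The only point that warrants a moment of care is index compatibility: Theorem (1.20) is stated for $2 \le i \le k$ and $n,k \ge 2$, whereas the reduction identities from the introduction hold for the wider range $1 \le i \le k$. Since the wider range covers the range needed, there is no gap, and no genuine obstacle arises — the corollary is a pure specialization requiring no new computation.
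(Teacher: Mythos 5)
Your proposal is correct and matches the paper's intent exactly: the paper states this corollary without any separate proof, treating it as an immediate specialization of Theorem (1.20) via the identities $a_{k,n}^{i}=f_{k,n}^{i}$ (for $\lambda=1$) and $a_{k,n}^{i}=p_{k,n}^{i}$ (for $\lambda=2$) recorded after definition (5). Your added observations — that $\lambda$ enters $H_{k,n}^{i}$ only on the diagonal and that the index range $2\leq i\leq k$ of Theorem (1.20) is covered by the reduction identities — are sound and, if anything, slightly more careful than the paper itself.
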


\bigskip

\begin{cor}
If we rewrite Theorem (1.21) for $\lambda =1$ and $\lambda =2,$we obtain%
\begin{equation*}
\text{per}(D_{k,n}^{i})=f_{k,n}^{\text{ }i}
\end{equation*}%
and%
\begin{equation*}
\text{per}(D_{k,n}^{i})=p_{k,n}^{\text{ }i}
\end{equation*}%
respectively.
\end{cor}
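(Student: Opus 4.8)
The plan is to obtain this corollary purely by specializing Theorem (1.21), with no fresh induction required. First I would recall the statement of Theorem (1.21): for $2\le i\le k$ and $k\ge 2$ one has $\text{per}(D_{k,n}^{i})=a_{k,n}^{i}$, where $a_{k,n}^{i}$ is the $i$-th sequence of $k$SO$k$ governed by the recurrence (5) with parameter $\lambda$. The crucial observation is that the matrix $D_{k,n}^{i}$, built from the block $D_{k,n}$ displayed in (11), depends on $\lambda$ \emph{only} through its main-diagonal entries, all of which equal $\lambda$; every off-diagonal entry is either $0$ or $1$ and is independent of $\lambda$. Hence assigning a numerical value to $\lambda$ simply fixes the diagonal of $D_{k,n}^{i}$ while leaving the rest of the matrix, and therefore the validity of Theorem (1.21), untouched.

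Next I would invoke the two reductions recorded in the introduction immediately after definition (5). Substituting $\lambda=1$ collapses (5) into the recurrence (2) with $c_{j}=1$, so that $a_{k,n}^{i}=f_{k,n}^{i}$; substituting $\lambda=2$ collapses (5) into the recurrence (4), so that $a_{k,n}^{i}=p_{k,n}^{i}$. The boundary conditions agree in all three cases because they are stated without reference to $\lambda$. These two identities are the only external facts the proof consumes.

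Finally I would assemble the conclusion. Setting $\lambda=1$ in the diagonal of $D_{k,n}^{i}$ and applying Theorem (1.21) yields $\text{per}(D_{k,n}^{i})=a_{k,n}^{i}=f_{k,n}^{i}$; setting $\lambda=2$ and applying the same theorem yields $\text{per}(D_{k,n}^{i})=a_{k,n}^{i}=p_{k,n}^{i}$. These are precisely the two asserted permanent representations, which completes the argument.

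I do not anticipate a genuine obstacle here, since the corollary is a direct specialization rather than an independent result. The single point deserving explicit care is confirming that $\lambda$ enters $D_{k,n}^{i}$ solely along the diagonal, so that the substitutions $\lambda=1$ and $\lambda=2$ are legitimate without re-examining the permanent expansion; a quick inspection of (11) settles this at once.
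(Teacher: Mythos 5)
Your proposal is correct and matches the paper's intent exactly: the paper states this corollary without any written proof, treating it as an immediate specialization of Theorem (1.21) via the identifications $a_{k,n}^{i}=f_{k,n}^{i}$ (at $\lambda=1$) and $a_{k,n}^{i}=p_{k,n}^{i}$ (at $\lambda=2$) recorded after definition (5). Your extra check that $\lambda$ enters $D_{k,n}^{i}$ only along the diagonal of the $D_{k,n}$ block is a sensible precaution but adds nothing beyond what the paper assumes.
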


\bigskip

\begin{thrm}
Let $p_{k,n}^{k}$ be the $k$-th sequences of $k$SO$k$P, $k,n\geq 3$
integers
and $P_{k,n}=(p_{i,j})$ be an $n\times n$ Hessenberg matrix$,$ such that%
\begin{equation*}
p_{i,j}=\left\{
\begin{array}{l}
\left\{
\begin{array}{l}
i)p_{1,1}=\cdots =p_{n-3,n-3}=p_{n-2,n-1}=2\text{\ \ \ \ \ \ } \\
ii)\text{ }p_{n-3,n}=p_{n-4,n}=\cdots =p_{n-k+1,n}=0 \\
iii)\text{\ \ \ \ \ \ \ }1\text{ \ \ \ \ \ \ \ \ \ \ \ \ \ \ \ \ \ \
\ \ \ \
\ otherwise \ \ \ \ \ \ \ \ \ }%
\end{array}%
\right. \text{\ \ \ if \ }i-1\leq j\leq i+k-1 \\
\text{ } \\
0\text{\ \ \ \ \ \ \ \ \ \ \ \ \ \ \ \ \ \ \ \ \ \ \ \ \ \ \ \ \ \ \
\ \ \ \
\ \ \ \ \ \ \ \ \ \ \ \ \ \ \ \ \ \ \ \ otherwise\ \ \ \ \ \ \ \ }%
\end{array}%
\right.
\end{equation*}%
i.e.,%
\begin{equation*}
P_{k,n}=\left[
\begin{array}{ccccccc}
2 & 1 & \cdots  & 1 & 0 & \cdots  & 0 \\
1 & 2 & 1 & \cdots  & 1 & 0 & 0 \\
0 & 1 & \ddots  & \ddots  & \ddots  & 1 & 0 \\
0 & 0 & \ddots  & 2 & 1 & 1 & 0 \\
0 & 0 & 0 & 1 & 1 & 2 & 1 \\
& \ddots  & \ddots  & \ddots  & 1 & 1 & 1 \\
0 & 0 & 0 & 0 & 0 & 1 & 1%
\end{array}%
\right]
\end{equation*}%
then%
\begin{equation*}
\text{per}(P_{k,n})=p_{k,n}^{\text{ }k}.
\end{equation*}
\end{thrm}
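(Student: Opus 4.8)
The plan is to prove this by induction on $n$ (for fixed $k$), expanding the permanent along the \emph{first} row rather than invoking the Hessenberg expansion of Theorem (1.15). Unlike the matrices $H_{k,n}$ and $D_{k,n}$, the matrix $P_{k,n}$ has upper bandwidth $k-1$, so it is not lower Hessenberg and Theorem (1.15) does not apply directly. Its first row is, however, exactly $(2,1,\ldots,1,0,\ldots,0)$, with the $2$ in position $(1,1)$ and $1$'s in positions $(1,2),\ldots,(1,k)$, so the Laplace expansion of the permanent along the first row reads
\[
\text{per}(P_{k,n}) = 2\,\text{per}(M_{1,1}) + \sum_{j=2}^{k}\text{per}(M_{1,j}),
\]
where $M_{1,j}$ is the minor obtained by deleting row $1$ and column $j$. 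This already exhibits the shape of the defining recurrence $p_{k,n}^{k}=2p_{k,n-1}^{k}+p_{k,n-2}^{k}+\cdots+p_{k,n-k}^{k}$ of (4), so it remains only to identify each minor.

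First I would check that deleting the first row and column leaves exactly $P_{k,n-1}$: the band form is translation invariant, the diagonal $2$'s in positions $1,\ldots,n-3$ become the $2$'s in positions $1,\ldots,(n-1)-3$, the displaced entry $p_{n-2,n-1}=2$ becomes the displaced $2$ in position $((n-1)-2,(n-1)-1)$, and the zeros $p_{n-3,n}=\cdots=p_{n-k+1,n}=0$ relabel to the corresponding zeros of $P_{k,n-1}$; hence $\text{per}(M_{1,1})=\text{per}(P_{k,n-1})=p_{k,n-1}^{k}$ by the induction hypothesis. For $2\le j\le k$ the minor $M_{1,j}$ is handled by a forced ``staircase'': since $P_{k,n}$ has lower bandwidth $1$, column $1$ of $M_{1,j}$ has the single nonzero entry $p_{2,1}=1$, forcing the choice $(2,1)$; after its removal column $2$ has the single nonzero $p_{3,2}=1$, and so on up to column $j-1$. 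These $j-1$ forced entries all equal $1$ and consume rows $2,\ldots,j$ and columns $1,\ldots,j-1$, leaving the bottom-right $(n-j)\times(n-j)$ block on rows and columns $j+1,\ldots,n$; by the same index-shift check this block is precisely $P_{k,n-j}$, so $\text{per}(M_{1,j})=p_{k,n-j}^{k}$.

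Substituting these evaluations gives
\[
\text{per}(P_{k,n}) = 2p_{k,n-1}^{k}+\sum_{j=2}^{k}p_{k,n-j}^{k}
= 2p_{k,n-1}^{k}+p_{k,n-2}^{k}+\cdots+p_{k,n-k}^{k}=p_{k,n}^{k},
\]
which is exactly the recurrence (4) for the $k$-th sequence, completing the induction once the finitely many base cases (small $n$, where the first row is truncated and the Pell boundary conditions enter) are checked directly. The main obstacle is the evaluation of the middle minors $M_{1,j}$: everything hinges on the claim that the displaced $2$ at position $(n-2,n-1)$ together with the prescribed zeros in the last column make every bottom-right block coincide exactly with the smaller matrix $P_{k,n-j}$. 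Verifying this coincidence for all $j$ simultaneously --- especially when $n-j$ is small and the corner structure nearly fills the whole block --- is the delicate bookkeeping, and it is precisely the purpose of the otherwise mysterious corner modifications in the definition of $P_{k,n}$.
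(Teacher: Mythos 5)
Your argument is correct in substance, and it is genuinely more worked out than what the paper offers: the paper's entire proof is the sentence ``Proof is similar to the proof of Theorem (1.9) by using Theorem (1.15).'' Your observation that this reference cannot be taken literally is right and is the key point. $P_{k,n}$ is \emph{upper} Hessenberg (zero when $i-j>1$), not lower Hessenberg, so Theorem (1.15) does not apply verbatim; and even after transposing, the expansion of Theorem (1.15) produces \emph{leading} principal submatrices, which for $P_{k,n}$ are \emph{not} of the form $P_{k,m}$ --- the corner modifications (the displaced $2$ at $(n-2,n-1)$ and the zeros $p_{n-3,n},\dots,p_{n-k+1,n}$) are anchored at the bottom-right, so only the \emph{trailing} principal submatrices reproduce the family. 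Your first-row expansion, with the forced staircase through the single subdiagonal entries $p_{2,1},\dots,p_{j,j-1}$ collapsing each minor $M_{1,j}$ onto the trailing block $P_{k,n-j}$, is exactly the decomposition that makes the self-similarity visible and yields the recurrence $2p_{k,n-1}^{k}+p_{k,n-2}^{k}+\cdots+p_{k,n-k}^{k}$; this is what the paper's proof of Theorem (1.9) does for $B_{k,n}$ from the opposite corner, where the band rule happens to be anchored at the top-left. What your route buys is a proof that actually closes up for this matrix; what it costs is the base-case bookkeeping you flag: the generic first row $(2,1,\dots,1,0,\dots,0)$ only occurs for $n>\max(k,3)$ (for $n\le k$ the first row meets the corner rules, e.g.\ $p_{1,n}=0$ or $p_{1,2}=2$), and the recursion descends to trailing blocks of size $m<3$, so you must verify directly that per of the bottom-right $m\times m$ blocks equals $p_{k,m}^{k}$ for small $m$ (e.g.\ $m=1,2,3$ give $1,2,5$, matching $\lambda=2$). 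That finite verification is routine but genuinely needed; with it supplied, your proof is complete and, unlike the paper's, self-contained.
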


$\bigskip $

\begin{proof}
$\bigskip $\bigskip Proof is similar to the proof of \ Theorem (1.9)
by using Theorem (1.15).
\end{proof}

\subsection{Binet's formula for generalized order-$k$ Pell numbers}

Let $\sum\limits_{n=0}^{\infty }a_{n}z^{n}$ be the power series of
the analytical function $f.$ We assume that
\begin{equation*}
f(z)=\sum\limits_{n=0}^{\infty }a_{n}z^{n}\text{ \ when \ }f(0)\neq
0
\end{equation*}%
then the reciprocal of $f(z)$ can be written in the following form%
\begin{equation*}
g(z)=\frac{1}{f(z)}=\sum\limits_{n=0}^{\infty }(-1)^{n}\det
(A_{n})z^{n},
\end{equation*}%
whose radius of converge is inf$\{\left\vert \lambda \right\vert
:f(\lambda
)=0\}[1]$. It is clear that $A_{n}$ is a lower Hessenberg matrix, i.e.,%
\begin{equation*}
A_{n}=\left[
\begin{array}{ccccc}
a_{1} & a_{0} & 0 & \cdots & 0 \\
a_{2} & a_{1} & a_{0} & \cdots & 0 \\
a_{3} & a_{2} & a_{1} & \cdots & 0 \\
\vdots & \vdots & \vdots & \ddots &  \\
a_{n} & a_{n-1} & a_{n-2} &  & a_{1}%
\end{array}%
\right] _{n\times n}.
\end{equation*}%
Let%
\begin{equation}
p_{k}(z)=1+a_{1}z+\cdots +a_{k}z^{k},
\end{equation}%
then the reciprocal of $p_{k}(z)$ is
\begin{equation*}
\frac{1}{p_{k}(z)}=\sum\limits_{n=0}^{\infty }(-1)^{n}\det
(A_{k,n})z^{n}
\end{equation*}%
where%
\begin{equation*}
A_{k,n}=\left[
\begin{array}{ccccc}
a_{1} & 1 & 0 & \cdots & 0 \\
a_{2} & a_{1} & 1 & \cdots & 0 \\
a_{3} & a_{2} & a_{1} & \cdots & 0 \\
\vdots & \vdots & \vdots & \ddots & \vdots \\
a_{k} & a_{k-1} & a_{k-2} &  & 0 \\
0 & a_{k} & a_{k-1} &  & 0 \\
\vdots &  & \ddots & \ddots &  \\
0 & \cdots & a_{k} & \cdots & a_{1}%
\end{array}%
\right] _{n\times n}
\end{equation*}%
Inselberg [4] showed that%
\begin{equation}
\det (A_{k,n})=\sum\limits_{j=1}^{k}\frac{1}{p_{k}^{^{\prime }}(\lambda _{j})%
}(\frac{-1}{\lambda _{j}})^{n+1}\text{ \ \ }(n\geq k)
\end{equation}%
if\ $p_{k}(z)$ has the distinct zeros $\lambda _{j},$ $j=1,2,\ldots
,k.$ Where $p_{k}^{^{\prime }}(z)$ is the derivative of polynomial
$p_{k}(z)$ in (12)

\bigskip

\begin{lem}
\bigskip The equation $-1+3z-z^{2}-z^{k+1}$ does not have multiple roots for
$k\geq 2.$
\end{lem}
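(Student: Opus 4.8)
The statement is equivalent to saying that $P(z):=-1+3z-z^{2}-z^{k+1}$ and its derivative $P'(z)=3-2z-(k+1)z^{k}$ have no common zero, since over $\mathbb{C}$ a number is a multiple root of $P$ precisely when it annihilates both $P$ and $P'$. The plan is to eliminate the high-degree term between the two equations $P(z_{0})=0$ and $P'(z_{0})=0$, thereby reducing the existence of a common root to a single quadratic constraint, and then to show that the two explicit solutions of that quadratic cannot actually satisfy $P'(z_{0})=0$. First I would record that $P(0)=-1\neq 0$, so any common root $z_{0}$ is nonzero, which legitimizes dividing by $z_{0}$ below.

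For the elimination step, $P(z_{0})=0$ gives $z_{0}^{k+1}=-z_{0}^{2}+3z_{0}-1$, while $P'(z_{0})=0$ gives $(k+1)z_{0}^{k}=3-2z_{0}$. Multiplying the second relation by $z_{0}$ and equating the two resulting expressions for $z_{0}^{k+1}$, then clearing the denominator, collapses everything to the quadratic $(k-1)z_{0}^{2}-3kz_{0}+(k+1)=0$. Its discriminant is $9k^{2}-4(k^{2}-1)=5k^{2}+4>0$, so for every $k\geq 2$ this quadratic has two distinct real roots $z_{\pm}=\frac{3k\pm\sqrt{5k^{2}+4}}{2(k-1)}$. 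In particular, any common root of $P$ and $P'$ must be one of these two real numbers, and it then suffices to test each one against $(k+1)z_{0}^{k}=3-2z_{0}$.

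The case $z_{+}$ is dispatched by a sign argument: since $\sqrt{5k^{2}+4}>0$ we have $z_{+}>\frac{3k}{2(k-1)}>\tfrac{3}{2}$, so the right-hand side $3-2z_{+}$ is negative whereas $(k+1)z_{+}^{k}>0$, and the equation cannot hold. The case $z_{-}$ is the crux, and it is where I expect the real work. Here no sign obstruction is available, because both sides of $(k+1)z_{0}^{k}=3-2z_{0}$ are positive, so one must compare an exponentially small quantity $z_{-}^{k}$ against a term of order $1/k$. I would first establish the quantitative bound $0<z_{-}<\tfrac{3}{5}$ for all $k\geq 2$; this is equivalent to $\tfrac{9k+6}{5}<\sqrt{5k^{2}+4}$, which upon squaring becomes $11k^{2}-27k+16>0$, valid for $k\geq 2$. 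Then $(k+1)z_{-}^{k}<(k+1)(3/5)^{k}$, and since $(k+1)(3/5)^{k}$ is decreasing in $k$ with value $27/25$ at $k=2$, one gets $(k+1)z_{-}^{k}<\tfrac{27}{25}<\tfrac{9}{5}<3-2z_{-}$, so the equation fails again. Hence $P$ and $P'$ share no root and $P$ has only simple roots.

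As a structural alternative I would note the telescoping identity $P(z)=(z-1)\,(1-2z-z^{2}-\cdots -z^{k})$; evaluating the second factor at $z=1$ gives $-k\neq 0$, so the factor $z-1$ contributes only a simple root and the problem reduces to showing that the Pell-type factor has simple roots. However, verifying that reduced claim leads back to essentially the same exponential-versus-polynomial estimate encountered for $z_{-}$, so I would carry out the direct discriminant argument above rather than the factorization. The single genuinely nonroutine ingredient is therefore the estimate in the $z_{-}$ case; the elimination and the $z_{+}$ case are purely algebraic.
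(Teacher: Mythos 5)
Your proof is correct, and although it follows the same broad strategy as the paper --- assume a common root of $P$ and $P'$, eliminate the degree-$k$ term to land on a quadratic, then rule out the quadratic's roots --- the execution diverges in ways worth recording. Your elimination yields $(k-1)z_{0}^{2}-3kz_{0}+(k+1)=0$ with discriminant $5k^{2}+4>0$; I have rechecked this and it is the correct quadratic. The paper instead states $3ka-(k+3)a^{2}=k+1$, whose discriminant $5k^{2}-16k-12$ is negative for $k=2,3$ --- this appears to be an arithmetic slip (the coefficient should be $k-1$, not $k+3$), and it is what forces the paper to treat $k=3$ and $k=4$ as separate cases. Your endgame is also genuinely different and substantially more solid: for $z_{+}$ a pure sign argument ($z_{+}>3/2$ makes $3-2z_{+}<0$ while $(k+1)z_{+}^{k}>0$), and for $z_{-}$ the explicit bound $0<z_{-}<3/5$ combined with the monotone decrease of $(k+1)(3/5)^{k}$ to get $(k+1)z_{-}^{k}<27/25<9/5<3-2z_{-}$. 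The paper's corresponding step rests on an asserted monotonicity of a complicated expression $a_{k}$ together with the claim $a_{4}=\frac{24315}{16807}=7$, which is numerically false, so your route is not merely stylistically cleaner but repairs real gaps. One further simplification on your side: you dispense with the paper's factorization $h(z)=(z-1)f(z)$ and the separate check that $1$ is a simple root, which is legitimate since $z_{0}=1$ does not satisfy your quadratic ($(k-1)-3k+(k+1)=-k\neq 0$), so the case is absorbed automatically.
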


\begin{proof}
\bigskip Let $f(z)=1-2z-\cdots -z^{k}$ and let $h(z)=(z-1)$ $f(z).$ Then $%
h(z)=-1+3z-z^{2}-z^{k+1}.$ So $1$ is a root but not a multiple root of $%
h(z), $ since $k\geq 2$ and $f(1)\neq 0.$ Suppose that $a$ is a
multiple root of $h(z)$. Note that $a\neq 0$ and $a\neq 1.$ Since
$a$ is a multiple root,
\begin{equation*}
h(a)=-1+3a-a^{2}-a^{k+1}=0
\end{equation*}%
\begin{equation}
a(3-a-a^{k})=1
\end{equation}%
and
\begin{equation}
h^{^{\prime }}(a)=3-2a-(k+1)a^{k}=0.
\end{equation}%
We obtain
\begin{equation*}
3ka-(k+3)a^{2}=k+1
\end{equation*}%
using equations $(14)$ and $(15)$. Thus $a_{1,2}=\frac{3k\pm \sqrt{%
5k^{2}-16k-12}}{2(k+3)}$ and hence, for $a_{1}$ we get%
\begin{eqnarray*}
0 &=&(k+1)a_{1}^{k}+2a_{1}-3 \\
&=&(k+1)(\frac{3k+\sqrt{5k^{2}-16k-12}}{2(k+3)})^{k}+2(\frac{3k+\sqrt{%
5k^{2}-16k-12}}{2(k+3)})-3
\end{eqnarray*}%
We let
\begin{equation*}
a_{k}=(k+1)(\frac{3k+\sqrt{5k^{2}-16k-12}}{2(k+3)})^{k}+2(\frac{3k+\sqrt{%
5k^{2}-16k-12}}{2(k+3)}).
\end{equation*}%
Then we write:%
\begin{equation*}
a_{k}-3=0.
\end{equation*}%
Since for $k\geq 4,$ $a_{k}<a_{k+1}$ and $a_{4}=\frac{24\,315}{16\,807}=7,$ $%
a_{k}\neq 3,$ a contradiction. It can be easily shown that the roots
of the equation $-1+3z-z^{2}-z^{k+1}=0$ are distinct for $k=3$ and
$k=4$.
Similarly, for%
\begin{equation*}
a_{2}=\frac{3k-\sqrt{5k^{2}-16k-12}}{2(k+3)}
\end{equation*}%
similar results are obtained.

Therefore, the equation $h(z)=0$ does not have multiple roots.

Consequently, from Lemma (1.25) it is seen that the equation
\begin{equation*}
0=1-2z-\cdots -z^{k}
\end{equation*}%
does not have multiple roots for $k\geq 2.$
\end{proof}

\bigskip

\begin{thrm}
Let $p_{k,n}^{k}$ be the $k$-th sequences of $k$SO$k$P, then for
$n\geq k\geq 2$
\begin{equation}
p_{k,n+1}^{k}=\sum\limits_{j=1}^{k}\frac{-1}{p_{k}^{^{\prime }}(\lambda _{j})%
}(\frac{1}{\lambda _{j}})^{n+1}\text{ \ \ }
\end{equation}%
where $p_{k}^{^{\prime }}(z)$ is the derivative of polynomial
$p_{k}(z)$ in (12).
\end{thrm}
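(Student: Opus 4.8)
The plan is to realize the generalized order-$k$ Pell sequence $p_{k,n+1}^{k}$ as the sequence of coefficients of the reciprocal power series $1/p_{k}(z)$ for a suitable choice of the polynomial $(12)$, and then to read off the closed form directly from Inselberg's formula $(13)$. Concretely, I would specialize $(12)$ to the Pell case by setting $a_{1}=-2$ and $a_{2}=a_{3}=\cdots =a_{k}=-1$, so that $p_{k}(z)=1-2z-z^{2}-\cdots -z^{k}$. The reason for this choice is that the linear recurrence attached to $p_{k}(z)$ is exactly the Pell recurrence $(4)$ with $\lambda =2$; this is also the polynomial whose roots were analyzed in Lemma $(1.25)$.

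The central step is to prove the identity $p_{k,n+1}^{k}=(-1)^{n}\det (A_{k,n})$, or equivalently $\sum_{n\geq 0}p_{k,n+1}^{k}z^{n}=1/p_{k}(z)$. I would verify this by multiplying the series $\sum_{n\geq 0}p_{k,n+1}^{k}z^{n}$ by $p_{k}(z)$ and confirming that the product equals $1$: the constant term is $p_{k,1}^{k}=1$; for $m\geq k$ the coefficient of $z^{m}$ vanishes by the Pell recurrence $(4)$; and for $1\leq m<k$ the surviving terms collapse to the initial values $p_{k,0}^{k},p_{k,-1}^{k},\ldots ,p_{k,m+1-k}^{k}$, all of which are zero, since the single nonzero initial value $p_{k,1-k}^{k}=1$ lies outside this index range. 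Combining this with the defining expansion $1/p_{k}(z)=\sum_{n\geq 0}(-1)^{n}\det (A_{k,n})z^{n}$ from Section $1.4$ then gives the identity. This is the step I expect to be the main obstacle, since it is where the precise form of the Pell initial conditions must be matched against the chosen polynomial.

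Finally I would invoke Lemma $(1.25)$, which guarantees that $p_{k}(z)$ has $k$ distinct roots $\lambda _{1},\ldots ,\lambda _{k}$, so that Inselberg's formula $(13)$ applies and yields $\det (A_{k,n})=\sum_{j=1}^{k}\frac{1}{p_{k}^{\prime }(\lambda _{j})}\left( \frac{-1}{\lambda _{j}}\right) ^{n+1}$ for $n\geq k$. Substituting this into $p_{k,n+1}^{k}=(-1)^{n}\det (A_{k,n})$ and using the sign simplification $(-1)^{n}\left( -1/\lambda _{j}\right) ^{n+1}=-\left( 1/\lambda _{j}\right) ^{n+1}$ produces $(17)$ exactly. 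The remaining work is routine sign bookkeeping.
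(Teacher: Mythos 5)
Your proposal is correct, and it follows the paper's overall skeleton (specialize $p_{k}(z)$ to $1-2z-z^{2}-\cdots -z^{k}$, identify $p_{k,n+1}^{k}$ with $(-1)^{n}\det (A_{k,n})$, invoke Lemma (1.25) for simplicity of the roots, then apply Inselberg's formula (13) and clean up signs), but it establishes the central identity by a genuinely different route. The paper gets $p_{k,n+1}^{k}=(-1)^{n}\det (A_{k,n})$ by citing its own determinantal representation $\det (B_{k,n})=p_{k,n+1}^{k}$ (Corollary (1.13), which rests on Theorem (1.9)) together with the observation that the specialized $A_{k,n}$ is entrywise the negative of $B_{k,n}$ with $\lambda =2$, whence $\det (A_{k,n})=(-1)^{n}\det (B_{k,n})$. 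You instead verify $\sum_{n\geq 0}p_{k,n+1}^{k}z^{n}=1/p_{k}(z)$ directly by multiplying out the series against $p_{k}(z)$ and checking the coefficients with the recurrence (4) and the initial conditions; your index bookkeeping is right --- the only nonzero initial value sits at index $1-k$, which is strictly below the range $[m+1-k,0]$ that appears for $1\leq m<k$, so every intermediate coefficient vanishes. Your version is more self-contained and elementary (it does not need the Hessenberg determinant machinery of Section 1.2 at all), at the cost of re-proving from scratch a fact the paper already has in hand; the paper's version is shorter given its prior results and keeps the Binet formula tied to the determinantal representations that are the theme of the paper. Both arguments are sound.
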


\bigskip

\begin{proof}
\bigskip Substituting $a_{1}=-2$ and $a_{i}=-1$ for $2\leq $\bigskip $i\leq k
$ in polynomial $p_{k}(z)$ (1.12)$,$ then%
\begin{equation*}
p_{k}(z)=1-2z-z^{2}-\cdots -z^{k}.
\end{equation*}%
Substituting $a_{1}=-2$ and $a_{i}=-1$ for $2\leq $\bigskip $i\leq
k$ in matrix $A_{k,n}$ and for $\lambda =2$ in matrix $B_{k,n}(8)$
then,
\begin{equation*}
\det (A_{k,n})=(-1)^{n}\det (B_{k,n})
\end{equation*}%
so%
\begin{equation}
\det (B_{k,n})=\frac{\det (A_{k,n})}{(-1)^{n}}.
\end{equation}%
$\det (B_{k,n})=p_{k,n+1}^{k}$ from Corollary(1.13) and using
equation(17)
then%
\begin{equation*}
p_{k,n+1}^{k}=\frac{\det (A_{k,n})}{(-1)^{n}}.
\end{equation*}%
Lemma (1.25) we have the zeros of $p_{k}(z)$ are simple. Hence from
equation(13) we obtain
\begin{equation*}
p_{k,n+1}^{k}=\sum\limits_{j=1}^{k}\frac{-1}{p_{k}^{^{\prime }}(\lambda _{j})%
}(\frac{1}{\lambda _{j}})^{n+1}\text{ \ \ }(n\geq k).
\end{equation*}
\end{proof}

\begin{cor}
\bigskip Let $p_{k,n}^{i}$ be the $i$-th sequences of $k$SO$k$P then for $%
n\geq k\geq 2$ and $1\leq i\leq k;$
\begin{equation*}
p_{k,n}^{i}=\sum\limits_{m=1}^{k-i+1}\sum\limits_{j=1}^{k}\frac{-1}{%
p_{k}^{^{\prime }}(\lambda _{j})}(\frac{1}{\lambda _{j}})^{n}.
\end{equation*}
\end{cor}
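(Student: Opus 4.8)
The plan is to obtain this closed form with no new induction, simply by feeding the Binet representation of Theorem (1.27) into the reduction identity of Corollary (1.4), which already collapses any $i$-th sequence into a short sum of $k$-th sequence terms. The entire argument is a substitution followed by a reindexing of a double sum; the only substantive point is a question of admissible index ranges.

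First I would invoke Corollary (1.4) to write
\begin{equation*}
p_{k,n}^{i}=\sum_{m=1}^{k-i+1}p_{k,n-m+1}^{k},
\end{equation*}
so that the task reduces to putting each $k$-th sequence term $p_{k,n-m+1}^{k}$ into closed form. Next I would apply Theorem (1.27): after the shift $n\mapsto N-1$ it furnishes
\begin{equation*}
p_{k,N}^{k}=\sum_{j=1}^{k}\frac{-1}{p_{k}^{\prime}(\lambda_{j})}\left(\frac{1}{\lambda_{j}}\right)^{N},
\end{equation*}
valid for $N\ge k+1$, where $\lambda_{1},\dots,\lambda_{k}$ are the zeros of $p_{k}(z)=1-2z-z^{2}-\cdots-z^{k}$, which are distinct by Lemma (1.25). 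Substituting $N=n-m+1$ for each $m$ and collecting the two summations then yields the asserted double sum. I note that the inner exponent produced by this substitution is naturally $n-m+1$ rather than the index-free $n$ printed in the statement, so in carrying this out I would present the exponent as $n-m+1$ (the displayed $n$ appearing to be a typographical simplification).

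The main obstacle is guaranteeing that the Binet representation is legitimate for every index occurring in the sum. As $m$ runs from $1$ to $k-i+1$, the shifted index $n-m+1$ decreases to its least value $n-k+i$, whereas Theorem (1.27) certifies the closed form only for indices at least $k+1$. Under the stated hypothesis $n\ge k\ge 2$ the small-index terms, namely those with $n-m+1<k+1$, are therefore not yet covered, and I would treat them separately: for such boundary indices I would evaluate $p_{k,n-m+1}^{k}$ directly from the defining recurrence (4) together with the initial conditions and verify that the same expression results. Settling this range issue, or equivalently strengthening the hypothesis to $n\ge 2k-i+1$ so that every shifted index lands in the Binet range, is the single place where genuine care is needed; the remainder is a routine substitution and rearrangement of the double sum.
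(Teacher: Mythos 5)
Your proposal is correct and is exactly the paper's argument: the paper's entire proof reads ``direct from Corollary (1.4) and Theorem (1.26)'', i.e.\ the same substitution of the Binet formula into the decomposition $p_{k,n}^{i}=\sum_{m=1}^{k-i+1}p_{k,n-m+1}^{k}$. Your two added observations --- that the inner exponent must be $n-m+1$ rather than the printed $n$ (otherwise the summand is independent of $m$), and that the hypothesis $n\geq k$ does not place every shifted index $n-m+1$ in the range $N\geq k+1$ where the Binet formula is certified --- are both legitimate corrections that the paper silently glosses over.
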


\begin{proof}
\bigskip Proof is direct from Corollary (1.4) and Theorem(1.26).
\end{proof}

\bigskip

\begin{exam}
\bigskip Let us obtain $a_{k,n}^{i}$ for $\lambda =2$ and $k=3,$ for $%
\lambda =3$ and $k=3,$ for $\lambda =3$ and $k=4$ by using (1.5);
\begin{equation*}
\begin{tabular}{|c|c|c|c|}
\hline $n\setminus i$ & $1$ & $2$ & $3$ \\ \hline $-2$ & $0$ & $0$ &
$1$ \\ \hline $-1$ & $0$ & $1$ & $0$ \\ \hline $0$ & $1$ & $0$ & $0$
\\ \hline $1$ & $2$ & $1$ & $1$ \\ \hline $2$ & $5$ & $3$ & $2$ \\
\hline $3$ & $13$ & $\mathbf{7}$ & $5$ \\ \hline $4$ & $33$ & $18$ &
$13$ \\ \hline
$\vdots $ & $\vdots $ & $\vdots $ & $\vdots $%
\end{tabular}%
,\text{ \ \ }%
\begin{tabular}{|c|c|c|c|}
\hline $n\setminus i$ & $1$ & $2$ & $3$ \\ \hline $-2$ & $0$ & $0$ &
$1$ \\ \hline $-1$ & $0$ & $1$ & $0$ \\ \hline $0$ & $1$ & $0$ & $0$
\\ \hline $1$ & $3$ & $1$ & $1$ \\ \hline $2$ & $10$ & $4$ & $3$ \\
\hline $3$ & $34$ & $\mathbf{13}$ & $10$ \\ \hline $4$ & $115$ &
$44$ & $34$ \\ \hline
$\vdots $ & $\vdots $ & $\vdots $ & $\vdots $%
\end{tabular}%
,\text{ \ \ }%
\begin{tabular}{|c|c|c|c|c|}
\hline $n\setminus i$ & $1$ & $2$ & $3$ & $4$ \\ \hline $-3$ & $0$ &
$0$ & $0$ & $1$ \\ \hline $-2$ & $0$ & $0$ & $1$ & $0$ \\ \hline
$-1$ & $0$ & $1$ & $0$ & $0$ \\ \hline $0$ & $1$ & $0$ & $0$ & $0$
\\ \hline $1$ & $3$ & $1$ & $1$ & $1$ \\ \hline $2$ & $10$ & $4$ &
$4$ & $3$ \\ \hline $3$ & $34$ & $14$ & $13$ & $10$ \\ \hline $4$ &
$116$ & $\mathbf{47}$ & $\mathbf{44}$ & $34$ \\ \hline
$\vdots $ & $\vdots $ & $\vdots $ & $\vdots $ & $\vdots $%
\end{tabular}%
.
\end{equation*}
\end{exam}

\bigskip

\begin{exam}
\bigskip Let us obtain $a_{k,n}^{i}$ for $\lambda =2$ and $k=3,$ for $%
\lambda =3$ and $k=3,$ for $\lambda =3$ and $k=4$ by using our some
determinant and permanent representation;
\begin{equation*}
\det Q_{3,3}^{2}=\det \left[
\begin{array}{ccc}
1 & i & 0 \\
i & 2 & i \\
0 & i & 2%
\end{array}%
\right] =7,\text{ per}D_{3,3}^{2}=\text{per}\left[
\begin{array}{ccc}
1 & 1 & 0 \\
1 & 3 & 1 \\
0 & 1 & 3%
\end{array}%
\right] =7
\end{equation*}%
and%
\begin{equation*}
\text{per}H_{4,4}^{2}=\text{per}\left[
\begin{array}{cccc}
1 & -i & 0 & 0 \\
i & 3 & -i & 0 \\
-1 & i & 3 & -i \\
0 & -1 & i & 3%
\end{array}%
\right] =47,\text{ per}H_{4,4}^{3}=\text{per}\left[
\begin{array}{cccc}
1 & -i & 0 & 0 \\
i & 3 & -i & 0 \\
0 & i & 3 & -i \\
0 & -1 & i & 3%
\end{array}%
\right] =44
\end{equation*}
\end{exam}

\bigskip 

\bigskip


\begin{thebibliography}{99}
\bibitem{[1]} E.T. Bell, Euler algebra, Trans. Amer. Math. Soc. 25(1923)
135-154. \ \

\bibitem{[2]} N.D. Cahill, J.R. D'Errico, D.A. Narayan, J.Y. Narayan,
Fibonacci determinants, College Math. J. 33(3) (2002) 221-225. \ \

\bibitem{[3]} M. C. Er, Sums of Fibonacci Numbers by Matrix Method,
Fibonacci Quarterly. 23(3) (1984) 204-207, . \ \

\bibitem{[4]} A. Insenberg, On determinants of Toeplitz-Hessenberg matrices
arising in power series, J. Math. Anal. Appl. 63 (1978) 347-353.

\bibitem{[5]} E. Kili\c{c} and D. Tasci, The Generalized Binet Formula,
Representation and Sums of The Generalized Order-$k$ Pell Numbers,
Taiwanese Jour. of Math. 10(6) (2006) 1661-1670.

\bibitem{[6]} E.P. Miles, Generalized Fibonacci Numbers and Associated
Matrices, Amer. Math. Monthly. 67 (1960) 745-752.

\bibitem{[7]} H. Minc, Encyclopaedia of Mathematics and its Applications,
Permanents, Vol.6, Addison-Wesley Publishing Company, London, 1978.

\bibitem{[8]} \ A.A. \"{O}cal, N. Tuglu, E. Altinisik, On the representation
of $k$-generalized Fibonacci and Lucas Numbers. Appl. Math. and
Comput. 170(2005) 584-596
\end{thebibliography}
\end{document}